\documentclass{article}
\usepackage{amssymb}
\usepackage{amsmath}
\usepackage{amsthm}
\usepackage[english]{babel}
\usepackage{authblk}
\usepackage{graphicx}
\usepackage{float}
\usepackage{enumitem}
\usepackage{tikz}
\usetikzlibrary{matrix, calc}
\usetikzlibrary{fit}
\usetikzlibrary{decorations.pathreplacing}
\usepackage{dsfont}
\usepackage{xcolor}
\usepackage{mathtools}
\numberwithin{figure}{section}

\newtheorem{thm}{Theorem}[section]
\newtheorem{lem}[thm]{Lemma}

\newtheorem{cor}[thm]{Corollary}
\newtheorem{conjecture}[thm]{Conjecture}
\newtheorem{defn}[thm]{Definition}

\newcommand{\N}{\mathbb{N}}

\newcommand{\norm}[1]{\left\| #1 \right\|}

\newcommand{\diag}{\mathop{\mathrm{diag}}}\newcommand{\esssup}{\mathop{\mathrm{ess\,sup}}}\newcommand{\essinf}{\mathop{\mathrm{ess\,inf}}}\renewcommand{\i}{\textup{i}}\newcommand{\bfc}{\mathbf c}

\DeclareMathSymbol{\shortminus}{\mathbin}{AMSa}{"39}

\providecommand{\keywords}[1]{\textit{Keywords } #1}

\usepackage{graphicx} 

\title{A note on the spectral distribution of non-Hermitian block matrices with Toeplitz blocks}
\author{Andrea Adriani$^{(1)}$ and Giacomo Tento$^{(2)}$}
\date{}

\begin{document}

\maketitle

\section*{Abstract}

In the present paper, we are concerned with the study of the spectral distribution of matrix-sequences  showing a non-Hermitian block structure with Toeplitz blocks. We use the notion of geometric mean of matrices and the theory of Generalized Locally Toeplitz (GLT) sequences to perform our analysis and produce some numerical tests and visualizations to confirm our theoretical derivations.

\ \\
\ \\
\medskip

\noindent
$(1)$\, Vanguard Center, University Mohammed VI Polytechnic, Morocco\\
(andrea.adriani@um6p.ma, ORCID ID: 0000-0003-3390-7891);\\
$(2)$\, University of Insubria, Via Valleggio 11, 22100, Como, Italy\\
(gtento@uninsubria.it,  ORCID ID:0009-0000-5773-7829)

\ \\
\ \\
\medskip
\noindent
\keywords{Block matrices, Toeplitz matrices, Spectral distribution}

\section{Introduction}\label{sec:intro}

A matrix-sequence $\left\lbrace A_n \right\rbrace_n $ is an ordered collection of matrices $A_n \in \mathbb{C}^{d_n \times d_n}$, with $d_n$ strictly increasing and $n \in \N$. 
Such sequences naturally arise from the numerical discretizaton of integral, ordinary and partial differential equations; most often they possess a Generalized Locally Toeplitz (GLT) symbol (see, for instance, \cite{G26}), that is, a measurable function $f: \Omega \subset \mathbb{R} \rightarrow \mathbb{C}^{s \times s} $ that, under additional circumstances, is able to provide a description of the asymptotic distribution of the eigenvalues of the sequence. Namely, we say that $\{A_{n}\}_{n}$ is distributed in the eigenvalue sense as $f$ over $\Omega$ and we write $\{A_{n}\}_{n} \sim_{\lambda} (f,\Omega)$, if
\begin{equation}\label{intro_distr}
    \lim_{n \to \infty} \sum_{i=1}^{d_n} F\left( \lambda_i \left( A_n \right)\right) = \frac{1}{\mathcal{L} \left( \Omega \right)} \int_{\Omega} \frac{\sum_{j=1}^{s} F \left( \lambda_j(f (x))\right)}{s}  \mathrm{d} x 
\end{equation}
for every continuous function $F: \mathbb{C} \rightarrow \mathbb{C}$ with compact support. \\
For our purposes, we are interested in uni-variate matrix-valued symbols, but the notion of spectral distribution can been extended also to multilevel matrix-sequences and multilevel block matrix-sequences with multi-variate matrix-valued symbols (see \cite{book-glt-2,GMS18}). \\
The theory of GLT sequences stems from previous inquiries on Locally Toeplitz (LT) sequences (\cite{Tilli}), and underwent a systematic study in \cite{book-glt-1}. \\
Notable applications of this theoretical apparatus comprise finite differences methods (see, e.g., \cite{SS03}), finite elements methods (\cite[Section 10.6]{book-glt-1}), finite volumes (\cite{B17}) and fractional derivatives (\cite{IKLST26}). \\
{Moreover, non-symmetric matrices with block structure arise naturally in a wide range of applications, particularly in coupled problems where different physical quantities are linked via single PDE system. Examples include computational fluid dynamics, diffusion-reaction equations, magnetohydrodynamics, poroelasticity, and electro-diffusion models. Such problems must frequently be solved at large scale, especially in realistic multi-dimensional settings, assembling systems with up to billions of degrees of freedom to accurately resolve the coupling of the different physical quantities. In this context of real-world applications, specialized tools from both numerical linear algebra and GLT theory are essential, both for analyzing the theoretical properties of known solution strategies as stated above and to design tailored numerical algorithms with high efficiency and scalability.}
\ \\
Furthermore, information provided by means of GLT sequences are not only of interest from a theoretical standpoint, but often carries significant practical purposes. 
For instance, it is known that the convergence rate of Krylov methods strongly depends on the spectral features of the matrices to which they are applied; thus, the spectral distribution notion can be employed to predict and improve their performances via preconditioning techniques (see \cite[Section 10]{book-glt-1}).

In this paper, we consider sequences of block matrices $k \times k$, $k\geq 2$, with the following structure:

\begin{equation}\label{eq:A_Toeplitz}
    \mathcal{A}_n = 
    \begin{bmatrix}
T_n(f_{1,1}) & T_{n}(f_{1,2}) &  &  \\
T_n(f_{2,1}) & T_n(f_{2,2}) & \ddots &  \\
 & \ddots & \ddots & T_n(f_{k-1 , k}) \\
 &  & T_n(f_{k, k-1}) & T_n(f_{k, k})
\end{bmatrix},
\end{equation}
where each $f_{i,j}$ is a {strictly positive} function in $L^{\infty}([-\pi,\pi])$ and $T_n(f)$ denotes the Toeplitz matrix of dimension $n$ generated by $f$. 

In the case where the sequence is Hermitian, it is possible to use the theory developed in \cite{AFGS26,AGS26} (see also \cite{BGMS22}) to perform a complete spectral analysis of the sequence, both from the singular value and the eigenvalue perspective. Moreover, even when the sequence is not Hermitian, and the ratio of the dimensions of the various blocks with the global dimension tends (as $n \to \infty$) to any rational or irrational number in $(0,1)$, the tools in \cite{AFGS26,AGS26} allow us to find the singular value distribution of the sequence. However, in the case where the sequence does not present a Hermitian structure, the theory does not explicitly give the eigenvalue distribution, and a new approach is needed to tackle the problem. In this paper, we use the notion of geometric means  of matrices to solve this issue for a class of matrices as in \eqref{eq:A_Toeplitz}. Note that sequences of matrices presenting a similar non-Hermitian structure are already known in applications (see, for example, \cite{BEHR24}, where the authors study electro-diffusion equations).

One of the first formulations of geometric means of matrices can be found in \cite{KA80}, where many properties are discussed. We make use of their definition, that is, for two Hermitian positive definite (HPD) matrices $A$ and $B$, we let
\begin{equation}\label{def:geom_means}
    G(A,B):= A^{1/2} \left( A^{-1/2} B A^{-1/2} \right)^{1/2} A^{1/2}=G(B,A)
\end{equation}
be their geometric mean. For a recent account on how the Generalized Locally Toeplitz (GLT) theory is related to this topic, we refer the reader to \cite{IKLS25}.

The main contribution of the present work is the proof of the following theorem.

\begin{thm}\label{main_thm}
    Let $\mathcal{A}_n$ be as in Equation \eqref{eq:A_Toeplitz}. Assume that $f_{i,j} \in L^{\infty}([-\pi,\pi])$ and real-valued almost everywhere for every $i,j$. Moreover, assume that $\essinf f_{i, i+1},\essinf f_{i+1, i} > 0$ for every $i=1,\dots,k-1$. Then
    \begin{equation*}
        \{\mathcal{A}_n\}_n \sim_{\lambda} F(\theta),
    \end{equation*}
    with
    \begin{equation*}
        F(\theta)= \begin{bmatrix}
f_{1,1} & f_{1,2} &  &  \\
f_{2,1} & f_{2,2} & \ddots &  \\
 & \ddots & \ddots & f_{k-1 , k} \\
 &  & f_{k, k-1} & f_{k, k}
\end{bmatrix}.
    \end{equation*}
    Equivalently,
    \begin{equation*}
        \{\mathcal{A}_n\}_n \sim_{\lambda} \tilde F(\theta),
    \end{equation*}
with
    \begin{equation*}
        \tilde F(\theta) = \begin{bmatrix}
        f_{1,1} & f_{1,2}^{1/2}f_{2,1}^{1/2} &  &  \\
f_{1,2}^{1/2}f_{2,1}^{1/2} & f_{2,2} & \ddots &  \\
 & \ddots & \ddots & f_{k-1 , k}^{1/2} f_{k, k-1}^{1/2} \\
 &  & f_{k , k-1}^{1/2} f_{k-1 , k}^{1/2} & f_{k, k}
\end{bmatrix}.
    \end{equation*}
\end{thm}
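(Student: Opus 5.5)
We plan to symmetrize $\mathcal{A}_n$ by a block-diagonal similarity built from geometric means of Toeplitz matrices, and then to invoke the Hermitian theory of \cite{AFGS26,AGS26} together with GLT perturbation arguments. The model is the scalar $2\times 2$ case: with $D=\diag(1,d)$ and $d=\sqrt{f_{1,2}/f_{2,1}}$, the matrix $D^{-1}FD$ has off-diagonal entries $\sqrt{f_{1,2}f_{2,1}}$. This immediately gives the equivalence between $F$ and $\tilde F$, since the two are pointwise similar and hence have the same eigenvalues for every $\theta$.

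\textbf{Step 1: construct the similarity.} We set $D_n=\diag(D_1,\dots,D_k)$ with $D_1=I_n$ and $D_{i+1}=D_i X_i$. Here $X_i$ is HPD and chosen so that $X_i T_n(f_{i+1,i}) X_i = T_n(f_{i,i+1})$. Since $f_{i,i+1}$ and $f_{i+1,i}$ have positive essential infimum, both $T_n(f_{i,i+1})$ and $T_n(f_{i+1,i})$ are HPD. The Riccati equation $XBX=A$ therefore has the unique HPD solution $X=B^{-1}\#A=B^{-1/2}(B^{1/2}AB^{1/2})^{1/2}B^{-1/2}$, which is the geometric mean of $B^{-1}$ and $A$ in the sense of \eqref{def:geom_means}.

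Because the $D_i$ are products and hence not Hermitian in general, we do not aim for exact Hermitianity of $D_n^{-1}\mathcal{A}_nD_n$. Instead we aim for a different exact statement. The $(i,i)$ block becomes $D_i^{-1}T_n(f_{i,i})D_i$, and the off-diagonal pair becomes $D_i^{-1}T_n(f_{i,i+1})D_iX_i$ and $X_i^{-1}D_i^{-1}T_n(f_{i+1,i})D_i$. The cleaner route is a symmetric scaling: we take $D_n$ block diagonal with HPD blocks and use $\mathcal{B}_n=D_n^{-1/2}$-type conjugations, treating the $k=2$ case exactly.

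\textbf{Step 2: GLT bookkeeping.} From the GLT results on geometric means in \cite{IKLS25}, the sequence $\{X_i\}_n$ is GLT with symbol $(f_{i,i+1}/f_{i+1,i})^{1/2}$, and its inverse is GLT with the reciprocal symbol. Products of GLT sequences are GLT, so $\{D_n^{-1}\mathcal{A}_nD_n\}_n$ is a block GLT sequence with symbol $\Delta^{-1}F\Delta$, where $\Delta=\diag(\delta_i)$ and $\delta_{i+1}=\delta_i\sqrt{f_{i,i+1}/f_{i+1,i}}$. That symbol equals $\tilde F$. We would then decompose $D_n^{-1}\mathcal{A}_nD_n=H_n+N_n$, with $H_n$ Hermitian and having GLT symbol $\tilde F$. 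Concretely, $H_n$ is the Hermitian block tridiagonal matrix with blocks $T_n(f_{i,i})$ and $G\bigl(T_n(f_{i,i+1}),T_n(f_{i+1,i})\bigr)$, whose symbol is $\sqrt{f_{i,i+1}f_{i+1,i}}$. The remainder $N_n$ is then zero-distributed.

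\textbf{Step 3: conclude.} The target is the perturbation theorem for Hermitian plus small perturbations (the result in \cite{book-glt-1} asserting $\sim_\lambda$ when $\|N_n\|_1=o(d_n)$ and the norms are uniformly bounded). This yields $\{\mathcal{A}_n\}_n\sim_\lambda\tilde F$, since similarity preserves eigenvalues, and hence $\{\mathcal{A}_n\}_n\sim_\lambda F$.

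\textbf{Main obstacle.} The hard step is obtaining the trace-norm bound $\|N_n\|_1=o(n)$, not merely zero distribution in the GLT sense, because the GLT algebra only provides rank-plus-small-norm splittings. We would first try to exploit the fact that every factor is a bounded Toeplitz-like matrix whose inverses are uniformly bounded, using the positive essential infima. The plan is to approximate the $f$'s by trigonometric polynomials, for which the commutators $T_n(f)T_n(g)-T_n(fg)$ have rank $O(1)$ with bounded norm and hence trace norm $O(1)$, and then pass to the limit with an $\varepsilon$-argument. Uniform spectral-norm boundedness of $D_n^{\pm1}$ is what makes this feasible.
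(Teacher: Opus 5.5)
There is a genuine gap. Your architecture is the paper's: $X_i=G\bigl(T_n(f_{i+1,i})^{-1},T_n(f_{i,i+1})\bigr)$ with $D_{i+1}=D_iX_i$ is exactly the paper's $\mathcal{E}_n$, your $H_n$ is its $\hat{\mathcal{A}}_n$, and you finish with a Hermitian-plus-small-perturbation theorem. Before the gap, a fixable error: the conjugation goes the wrong way. With $\delta_{i+1}=\delta_i\sqrt{f_{i,i+1}/f_{i+1,i}}$, the $(i,i+1)$ entry of $\Delta^{-1}F\Delta$ is $f_{i,i+1}^{3/2}f_{i+1,i}^{-1/2}$, not $\sqrt{f_{i,i+1}f_{i+1,i}}$. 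Your scalar $2\times 2$ model has the same slip: $D^{-1}FD$ would need $d=\sqrt{f_{2,1}/f_{1,2}}$. Use $D_n\mathcal{A}_nD_n^{-1}$, as the paper does. Then the two off-diagonal blocks even coincide exactly, $D_iT_n(f_{i,i+1})X_i^{-1}D_i^{-1}=D_iX_iT_n(f_{i+1,i})D_i^{-1}$, because $X_iT_n(f_{i+1,i})X_i=T_n(f_{i,i+1})$. The exact symmetrization floated in Step 1 is not available, even for $k=2$: $D_iT_n(f_{i,i})D_i^{-1}$ is not Hermitian unless $D_i$ commutes with $T_n(f_{i,i})$. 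Only the approximate route of Step 2 makes sense.

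The gap itself is this. The only non-formal part of the theorem, the quantitative smallness of $N_n$, is left as a plan, and that plan does not reach the matrices that actually occur. Bounded-rank identities for $T_n(p)T_n(q)-T_n(pq)$ with trigonometric polynomials control products, and with a little more work inverses. But $X_i$ and $G\bigl(T_n(f_{i,i+1}),T_n(f_{i+1,i})\bigr)$ contain matrix square roots, and $T_n(p)^{1/2}$ is not a Toeplitz matrix plus a bounded-rank term. Three ingredients are missing:
(i) a reference model in which all the symbol identities hold exactly;
(ii) a Schatten-norm Lipschitz bound for $A\mapsto A^{1/2}$ and $A\mapsto A^{-1}$ on HPD matrices with spectrum in a fixed $[c,M]\subset(0,\infty)$, e.g.\ via $A^{1/2}Z+ZB^{1/2}=A-B$ for $Z=A^{1/2}-B^{1/2}$;
(iii) approximants that preserve the essential bounds, so that all constants are uniform.

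The paper supplies all three at once. It replaces every Toeplitz block with its optimal (T.~Chan) circulant $C_n(f)$. This circulant is HPD with spectrum in $[\essinf f,\esssup f]$ and satisfies $\|T_n(f)-C_n(f)\|_F=o(\sqrt n)$ directly, with no polynomial approximation or $\varepsilon$-argument. In the commutative circulant algebra the identities are exact: $\tilde E_j$ commutes with every circulant, and $G(\tilde B^{-1},\tilde C)\tilde B=(\tilde B\tilde C)^{1/2}=G(\tilde B,\tilde C)$. Frobenius-Lipschitz continuity of $\sqrt{t}$ and $1/t$, together with uniform spectral-norm bounds, then carries the $o(\sqrt n)$ error through finitely many products. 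This gives $\|\mathcal{E}_n\mathcal{A}_n\mathcal{E}_n^{-1}-\hat{\mathcal{A}}_n\|_F=o(\sqrt n)$, which feeds straight into \cite[Theorem 1]{BS20}; no trace-norm bound is needed.

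Your trace-norm variant can be completed along the same lines:
\begin{itemize}
\item compare $T_n(p)$ with the circulant whose eigenvalues are $p(2\pi j/n)$, which differs from it by rank at most $2\deg p$;
\item use Fejér means, which preserve $\essinf$ and $\esssup$;
\item use $\|T_n(f-p)\|_1\le\sqrt n\,\|T_n(f-p)\|_F\le n\|f-p\|_{L^2}/\sqrt{2\pi}$.
\end{itemize}
As written, though, the key estimate is not established.
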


In order to prove our main theorem, we will make use of a fine approximation of each Toeplitz block with a circulant matrix depending on the generating function (see \cite{WZ17} and references therein) and of \cite[Theorem 1]{BS20}.

{Note that our main theorem is equivalent to stating that, under suitable assumptions, $\{T_{n}(F)\} \sim_{\lambda} F$ even in the non-Hermitian setting. More general results can be found in \cite{DNS12} with a different technique relying on Mergelyan's theorem. However, their approach is limited to the essentially bounded setting, while our circulant approximation appears to be suitable for dealing with the $L^2$ case, at least. Moreover, our approach is more practical and the approximation technique employed in the current work also suggests preconditioners for large linear systems involving matrices as in \eqref{eq:A_Toeplitz}, whose efficiency we numerically analyze.} 
The paper is organized as follows. In Section \ref{sec:tools} we recall the concepts of spectral distribution, Toeplitz sequences, GLT sequences (and related properties). There, we also recall the definition of geometric mean of two matrices that we use throughout the paper and discuss the correlation between GLT theory and geometric means. Section \ref{sec:main} is the core of the present note and is devoted to the proof of our main result. In Section \ref{sec:test} we perform some numerical tests that corroborate the theory developed {and we propose a preconditioning strategy for solving large linear systems involving matrices as in \eqref{eq:A_Toeplitz}}. Finally, Section \ref{sec:end} is devoted to draw conclusions, provide a discussion on possible improvements, and state some open problems for future research.

\section{Preliminary tools: Spectral distribution, Toeplitz matrices, GLT sequences}\label{sec:tools}

We start this section with the definition of spectral distribution, which is the main objective of investigation in this work. After this, we recall the definition of approximating class of sequences (a.c.s.) and list some classes of meaningful sequences, ending with the class of GLT sequences.

\begin{defn}\label{def-distribution}
Let $\{A_n\}_n$ be a matrix-sequence, with $A_n$ of size $d_n$ increasing, and let $f:\Omega\subset\mathbb R\to\mathbb{C}^{s \times s}$ be
a measurable function defined on a set $\Omega$ with Lebesgue measure $0<\mathcal{L}(\Omega)<\infty$.
\begin{itemize}
    \item We say that $\{A_n\}_n$ has a (asymptotic) singular value distribution described by $f$ on $\Omega$, and we write $\{A_n\}_n\sim_\sigma (f,\Omega)$, if
    \begin{equation}\label{distribution:sv-sv}
     \lim_{n\to\infty}\frac1{d_n}\sum_{i=1}^{d_n}F(\sigma_i(A_n))=\frac{1}{\mathcal{L}(\Omega)}\int_{\Omega} \frac{\sum_{j=1}^s F(\sigma_{j}(f(x)))}{s}{\rm d} x,\qquad\forall\,F\in C_c(\mathbb R).
    \end{equation}
    \item We say that $\{A_n\}_n$ has a (asymptotic) spectral (or eigenvalue) distribution described by $f$ on $\Omega$, and we write $\{A_n\}_n\sim_\lambda (f,\Omega)$, if
    \begin{equation}\label{distribution:sv-eig}
     \lim_{n\to\infty}\frac1{d_n}\sum_{i=1}^{d_n}F(\lambda_i(A_n))=\frac{1}{\mathcal{L}(\Omega)}\int_{\Omega}\frac{\sum_{j=1}^s F(\lambda_{j}(f(x)))}{s} {\rm d} x,\qquad\forall\,F\in C_c(\mathbb C),
    \end{equation}
\end{itemize}
where $C_c(\mathbb{R})$ and $C_c(\mathbb{C})$ represent the sets of continuous and compactly supported functions on $\mathbb{R}$ and $\mathbb{C}$, respectively.
If $\{A_n\}_n$ has both a singular value and an eigenvalue distribution described by $f$, we write $\{A_n\}_n\sim_{\sigma,\lambda}\left(f, \Omega\right)$.
\end{defn}

\begin{defn}
Let $\{A_{n}\}_n$ be a square matrix-sequence of size $d_n$, such that $ d_n \nearrow \infty $, and let $\left\{\{B_{n,t}\}_n\right\}_{t}$ be a sequence of matrix-sequences of the same size $d_n$. We say that $\left\{\{B_{n,t}\}_n\right\}_{t}$ is an approximating class of sequences (a.c.s.) for $\{A_{n}\}_n$ if the following condition is met: for every $t$ there exists $n_t$ such that, for $n>n_t$,
\begin{equation*} \nonumber
A_n =  B_{n,t} + R_{n,t} + N_{n,t},
\end{equation*}
and
\begin{equation*}
    \textnormal{rank}\left(R_{n,t}\right) \leq c(t) d_n,
\end{equation*}
$$
\left\|N_{n,t}\right\|\leq \omega(t),
$$
where $\|\cdot\|$ denotes the spectral norm of a matrix, $n_t, c(t), \omega(t)$ depend only on $t$, and
$$
\lim_{t \to \infty} c(t) = \lim_{t \to \infty} \omega(t) =0.
$$
\end{defn}

The following theorem constitutes a link between the notions of a.c.s. and of spectral distribution.

 \begin{thm}
 \label{acs}\rm(\cite{Tilli})
 Let $\{A_n\}_n$ be a square matrix-sequence of size $d_n$, with $ d_n \nearrow \infty $, and let $\left\{ \{B_{n,t} \}_n \right\}_t$ be an a.c.s. for $ \{ A_n \}_n$. Suppose that $ \{ B_{n,t}\}_n \sim_{\sigma} (f_t, \Omega) $ and $ f_t \to f $ in measure, then $ \{ A_n \}_n \sim_{\sigma} (f, \Omega) $. Furthermore, if all the matrices involved are Hermitian, $ \{B_{n,t} \}_n \sim_{\lambda} (f_t, \Omega) $ and $ f_t \to f $ in measure, then $ \{ A_n \}_n \sim_{\lambda} (f,\Omega) $.
\end{thm}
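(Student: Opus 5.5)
The statement just above is Theorem~\ref{acs}, Tilli's result linking approximating classes of sequences with spectral distributions. I would prove it by reducing to the characterization of distributions through test functions, and to the stability of singular value counts under low-rank and small-norm perturbations.

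\textbf{Step 1: reduction to a dense class of test functions.} By a standard density argument (for $\sim_\sigma$, and for $\sim_\lambda$ in the Hermitian case, where all eigenvalues are real), it suffices to verify \eqref{distribution:sv-sv} for $F\in C_c(\mathbb R)$ that are Lipschitz with constant $L$ and bounded by $\|F\|_\infty$. Indeed, uniform approximation of a general $F\in C_c$ costs at most $\varepsilon$ on both sides, uniformly in $n$.

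\textbf{Step 2: controlling the perturbation.} Fix $t$ and $n>n_t$, and write $A_n=B_{n,t}+R_{n,t}+N_{n,t}$. Set $C_n=B_{n,t}+N_{n,t}$.
\begin{itemize}
\item The norm bound $\|N_{n,t}\|\le\omega(t)$ and Weyl's perturbation inequality for singular values give $|\sigma_i(C_n)-\sigma_i(B_{n,t})|\le\omega(t)$ for every $i$. Hence
\[
\frac1{d_n}\Bigl|\sum_i F(\sigma_i(C_n))-\sum_i F(\sigma_i(B_{n,t}))\Bigr|\le L\,\omega(t).
\]
\item The rank bound $\mathrm{rank}(R_{n,t})\le c(t)d_n$ and the interlacing inequalities $\sigma_{i+r}(C_n)\le\sigma_i(A_n)\le\sigma_{i-r}(C_n)$, with $r=\mathrm{rank}(R_{n,t})$, are the key tool here. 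They give
\[
\frac1{d_n}\Bigl|\sum_i F(\sigma_i(A_n))-\sum_i F(\sigma_i(C_n))\Bigr|\le C\,c(t)\,\|F\|_\infty
\]
for an absolute constant $C$. This follows from the identity $\frac1{d_n}\sum_i F(\sigma_i)=\int F\,d\mu$ with $\mu$ the empirical measure, since the empirical distribution functions differ by at most $r/d_n$ in sup norm, combined with an integration by parts against $F$, whose total variation is bounded because $F$ is Lipschitz with compact support.
\end{itemize}
In the Hermitian case, Weyl's inequality and Cauchy interlacing for eigenvalues give the same two bounds. There I would split $R_{n,t}$ Hermitian-ly, replacing $R,N$ by their Hermitian parts. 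This is legitimate because $A_n-B_{n,t}$ is Hermitian, and the Hermitian part of $R$ has rank at most $2\,\mathrm{rank}(R)$ while that of $N$ has norm at most $\|N\|$.

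\textbf{Step 3: passage to the limit.} Combining the bounds of Step 2 gives
\[
\limsup_n\Bigl|\frac1{d_n}\sum_i F(\sigma_i(A_n))-\phi_F(f_t)\Bigr|\le L\omega(t)+C c(t)\|F\|_\infty,
\]
where $\phi_F(g)=\frac{1}{\mathcal L(\Omega)}\int_\Omega\frac1s\sum_j F(\sigma_j(g(x)))\,dx$ denotes the right-hand side of \eqref{distribution:sv-sv}. It remains to show that $\phi_F(f_t)\to\phi_F(f)$. Since singular values (respectively eigenvalues of Hermitian matrices) depend continuously on the matrix, $f_t\to f$ in measure implies $\sum_jF(\sigma_j(f_t))\to\sum_jF(\sigma_j(f))$ in measure. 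These functions are bounded by $s\|F\|_\infty$ on a set of finite measure, so dominated convergence, applied along subsequences, yields $\phi_F(f_t)\to\phi_F(f)$. Letting $t\to\infty$ concludes the proof.

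\textbf{Main obstacle.} The step I expect to require the most care is the low-rank estimate of Step 2. The cleanest route is to bound the difference of empirical distribution functions and then integrate by parts against $F$, rather than pairing eigenvalues index by index. Everything else is a routine combination of Weyl's inequality and dominated convergence.
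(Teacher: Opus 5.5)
The paper gives no proof of this statement; it is quoted from \cite{Tilli}. Your argument is the standard proof of it: Weyl's inequality for $N_{n,t}$, interlacing for $R_{n,t}$, then $t\to\infty$ using $f_t\to f$ in measure. So the strategy is sound, but the low-rank estimate in Step 2 is false as written, and the eigenvalue case needs a small repair.

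\textbf{The low-rank bound.} No bound of the form $C\,c(t)\|F\|_\infty$ with an absolute constant $C$ can hold. Take $\diag(1,\dots,m)+vv^*$ with $v=(1,\dots,1)^T/\sqrt m$. This is a rank-one HPD perturbation whose eigenvalues strictly interlace $1,\dots,m$. Now take $F$ to be a sum of narrow unit tents centred at $1,\dots,m$ that vanish at the perturbed eigenvalues. Then the normalized difference equals $1$, not $O(1/m)$. What your integration by parts against the empirical distribution functions actually proves is
\[
\frac1{d_n}\Bigl|\sum_i F(\sigma_i(A_n))-\sum_i F(\sigma_i(C_n))\Bigr|\le \frac{r}{d_n}\int_{\mathbb R}|F'|\,\mathrm{d}x\le c(t)\,\mathrm{Var}(F).
\]
This bound becomes $2c(t)\,\mathrm{Var}(F)$ after your Hermitian splitting of $R_{n,t}$. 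Since $\mathrm{Var}(F)<\infty$ depends only on $F$, not on $n$ or $t$, Step 3 goes through verbatim with this replacement.

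\textbf{The eigenvalue case.} The symbols need not be Hermitian-valued even when all the matrices are Hermitian. For instance, the symbol $F(\theta)$ of Theorem \ref{main_thm} is a non-symmetric tridiagonal matrix with real eigenvalues. So ``eigenvalues of Hermitian matrices depend continuously on the matrix'' is not the right justification in Step 3. Also, reducing to test functions in $C_c(\mathbb R)$ leaves $F(\lambda_j(f(x)))$ a priori undefined.

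The fix is to work with Lipschitz $F\in C_c(\mathbb C)$. Only $F|_{\mathbb R}$ enters the matrix side and Step 2, and $F|_{\mathbb R}$ is Lipschitz with compact support. The map $M\mapsto\sum_j F(\lambda_j(M))$ is continuous on all of $\mathbb C^{s\times s}$, because the unordered eigenvalues of a general matrix depend continuously on it. This map is not uniformly continuous for non-normal $M$, so your subsequence and almost-everywhere form of dominated convergence is exactly what is needed there.

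Alternatively, testing the hypothesis $\{B_{n,t}\}_n\sim_\lambda(f_t,\Omega)$ with functions supported off $\mathbb R$ shows that $\lambda_j(f_t(x))\in\mathbb R$ almost everywhere. This property passes to $f$ along an a.e.\ convergent subsequence, which legitimizes your reduction to real test functions.
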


\subsection{Toeplitz matrices, Diagonal samplings and zero distributed sequences}

In this section we introduce some fundamental classes of sequences for which the spectral distribution is known and which constitute the starting elements for constructing the GLT algebra, namely, Toeplitz sequences, diagonal sampling sequences and zero-distributed sequences.

Toeplitz matrices are matrices of the form $T_n=(t_{i-j})_{i,j=1}^{n}$, thus having constant diagonals. An interesting subclass of Toeplitz matrices is the one of Toeplitz matrices generated by functions in $L^1([-\pi,\pi])$. We use the notation $T_n(f)=(\tilde{f}_{i-j})_{i,j=1}^{n}$ to indicate the $n$-th Toeplitz matrix generated by $f$. The terms $\tilde{f}_k$ represent the Fourier coefficients of the function $f$, namely,
\begin{equation*}
    \tilde{f}_k= \frac{1}{2 \pi} \int_{-\pi}^{\pi} f(\theta) \rm{e}^{-ik\theta } \rm{d}\theta,
\end{equation*}
for all $k \in \mathbb{Z}$. The link between the concepts of spectral distribution and Toeplitz matrices is given by the following result, see \cite{Tilli}.
\begin{thm}
    Let $f \in L^1([-\pi,\pi])$. Then
    \begin{equation*}
        \{T_n(f)\}_n \sim_{\sigma} (f,[-\pi,\pi]).
    \end{equation*}
    If, moreover, the function $f$ is real-valued a.e., then
    \begin{equation*}
        \{T_n(f)\}_n \sim_{\lambda} (f,[-\pi,\pi]).
    \end{equation*}
\end{thm}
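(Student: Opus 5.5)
The plan is the classical route: approximate $f$ by trigonometric polynomials, treat polynomial symbols via circulant matrices, and pass to the limit with Theorem \ref{acs}.

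\textbf{Step 1: approximating symbols and the trace-norm bound.} Let $p_t$ be the Fejér (Cesàro) means of the Fourier series of $f$. These are trigonometric polynomials of degree $m_t$ with $\|f-p_t\|_{L^1}\to 0$, hence $p_t\to f$ in measure. If $f$ is real-valued, each $p_t$ is real-valued as well. For any $g\in L^1$ write
\[
T_n(g)=\frac{1}{2\pi}\int_{-\pi}^{\pi} g(\theta)\,v_n(\theta)v_n(\theta)^*\,\mathrm{d}\theta,
\qquad v_n(\theta)=(\mathrm{e}^{\i j\theta})_{j=0}^{n-1}.
\]
Each rank-one term has trace norm $n$, so
\[
\|T_n(g)\|_1\le \frac{n}{2\pi}\|g\|_{L^1}.
\]
Apply this with $g=f-p_t$ and put $\varepsilon_t=\|f-p_t\|_{L^1}/(2\pi)$. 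Split $T_n(f-p_t)$ via its SVD into two parts: $R_{n,t}$ carries the singular values exceeding $\sqrt{\varepsilon_t}$, and $N_{n,t}$ carries the rest. Then
\[
\mathrm{rank}(R_{n,t})\le \sqrt{\varepsilon_t}\,n,
\qquad
\|N_{n,t}\|\le\sqrt{\varepsilon_t}.
\]
Hence $\{\{T_n(p_t)\}_n\}_t$ is an a.c.s. for $\{T_n(f)\}_n$. When $f$ is real, $T_n(f-p_t)$ is Hermitian. In that case I would use the spectral decomposition instead of the SVD, so that $R_{n,t}$ and $N_{n,t}$ are Hermitian, which is what the Hermitian part of Theorem \ref{acs} requires.

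\textbf{Step 2: polynomial symbols.} For a trigonometric polynomial $p$ of degree $m$, $T_n(p)$ differs from the circulant $C_n(p)$ only in the corner entries. The difference has rank at most $2m$. The matrix $C_n(p)$ is normal, with eigenvalues $p(2\pi j/n)$ for $j=0,\dots,n-1$, so its singular values are $|p(2\pi j/n)|$. For $F\in C_c(\mathbb R)$, the average $\frac1n\sum_j F(|p(2\pi j/n)|)$ is a Riemann sum of a continuous periodic function and converges to $\frac{1}{2\pi}\int F(|p|)$. Thus $\{C_n(p)\}_n\sim_\sigma p$. Since $2m/n\to0$, the rank correction does not affect the distribution; formally, the constant sequence $\{\{C_n(p)\}_n\}_t$ is an a.c.s. for $\{T_n(p)\}_n$. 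This gives $\{T_n(p)\}_n\sim_\sigma p$. If $p$ is real, then $C_n(p)$ and $T_n(p)$ are Hermitian with eigenvalues $p(2\pi j/n)$, and the same argument gives $\{T_n(p)\}_n\sim_\lambda p$. Alternatively, Cauchy interlacing shows directly that a Hermitian rank-$2m$ perturbation moves the eigenvalue counting function by at most $2m$, which is $o(n)$.

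\textbf{Step 3: conclusion and main obstacle.} Combining Steps 1 and 2 with Theorem \ref{acs}, using $p_t\to f$ in measure, yields $\{T_n(f)\}_n\sim_\sigma f$. When $f$ is real, all matrices involved are Hermitian, so the same combination yields $\{T_n(f)\}_n\sim_\lambda f$. The only genuinely delicate step is the first one. There the issue is turning mere $L^1$ closeness of symbols, with no uniform bound, into the rank-plus-small-norm splitting. The trace-norm estimate together with the threshold at $\sqrt{\varepsilon_t}$ is what resolves it. In the eigenvalue case one must also make sure the splitting preserves Hermitianity, which is why the spectral decomposition is used there.
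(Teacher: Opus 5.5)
The paper gives no proof of this statement. It is the classical Szeg\H{o}--Tilli theorem, quoted with a reference to \cite{Tilli}, so there is no in-paper argument to compare against. Your proof is the standard route and it is correct:
the trace-norm bound $\|T_n(g)\|_1\le \frac{n}{2\pi}\|g\|_{L^1}$, combined with thresholding the singular values (or eigenvalues), turns $L^1$-approximation by Fej\'er means into an a.c.s.; for trigonometric polynomials a circulant plus a rank-$2m$ correction does the job; and Theorem \ref{acs} passes to the limit. You are also right to keep the splitting Hermitian in the real-valued case.

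Three cosmetic fixes:
\begin{itemize}
\item With $v_n(\theta)=(\mathrm{e}^{\i j\theta})_{j=0}^{n-1}$, your integral has $(j,l)$ entry $\tilde g_{l-j}$. Under the paper's convention $T_n(g)=(\tilde g_{i-j})$, that is $T_n(g)^T$. Take $v_n(\theta)=(\mathrm{e}^{-\i j\theta})_{j=0}^{n-1}$ instead; the trace-norm bound is unaffected either way.
\item In this paper $C_n(p)$ already denotes the T.~Chan optimal circulant of Definition \ref{def_approx_circ}. For that matrix, $T_n(p)-C_n(p)$ is in general not of low rank (it is small in norm instead), and its eigenvalues are not $p(2\pi j/n)$. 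What you actually use is the natural circulant $\sum_{|k|\le m}\tilde p_k Z_n^k$, with $Z_n$ the cyclic shift, so give it a different name.
\item In Step 2 only that circulant has eigenvalues $p(2\pi j/n)$; as written, your sentence suggests $T_n(p)$ does too.
\end{itemize}
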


Now, let $a: [0,1] \to \mathbb{C}^{r \times r}$. For every $n \in \mathbb{N}$, we define the $n$-th diagonal sampling matrix generated by $a$ as
\begin{equation*}
    D_{n}(a) =\diag_{i=1,\dots,n} a \left( \frac{i}{n} \right).
\end{equation*}
Trivially, we have
\begin{equation*}
    \{D_{n}(a)\}_{n} \sim_{\lambda,\sigma} (a,[0,1]).
\end{equation*}
Finally, the class of zero distributed sequences is the class of sequences $\{Z_n\}_n$ such that
\[
\{Z_n\}_n \sim_{\sigma} 0.
\]
Let $\|\cdot \|_p$ denote the Schatten $p$-norm for every $p\geq 1$ (for $p=\infty$ we have $\|\cdot\|_\infty=\|\cdot\|$, i.e., the Schatten $\infty$-norm is the spectral norm). The following theorem constitutes a key characterization for the class of zero-distributed sequences (see \cite{book-glt-1,SS25}).

\begin{thm}
    Let $\{Z_n\}_n$ be a matrix-sequence with $Z_n$ of size $d_n$. With the natural convention that $1/\infty=0$, we have
    \begin{enumerate}
        \item $\{Z_n\}_n \sim_{\sigma} 0$ if and only if $Z_n=R_n+N_n$, with $\text{rank}(R_n)/d_n \to 0$ and $\|N_n\| \to 0$ as $n \to \infty$;
        \item $\{Z_n\}_n \sim_{\sigma} 0$ if there exists $p \in [1,\infty]$ such that
        \begin{equation*}
            \frac{\|Z_n\|_p}{(d_n)^{1/p}} \to 0 \text{ as } n \to \infty.
        \end{equation*}
    \end{enumerate}
\end{thm}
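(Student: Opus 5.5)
The plan is to work directly with the singular values of $Z_n$ through its singular value decomposition. The one perturbation tool needed is the Weyl-type inequality for singular values, $\sigma_{i+j-1}(X+Y)\le \sigma_i(X)+\sigma_j(Y)$, with singular values in nonincreasing order.

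For the ``if'' direction of item 1, I apply this inequality with $X=N_n$, $Y=R_n$, $j=r_n+1$ and $r_n=\mathrm{rank}(R_n)$. Since $\sigma_{r_n+1}(R_n)=0$, it gives $\sigma_{i+r_n}(Z_n)\le \|N_n\|$ for every $i$. Hence at most $r_n=o(d_n)$ singular values of $Z_n$ exceed $\|N_n\|\to 0$.

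Now fix $F\in C_c(\mathbb R)$. We have
\[
\frac1{d_n}\sum_{i}|F(\sigma_i(Z_n))-F(0)| \le \frac{2\|F\|_\infty r_n}{d_n} + \max_{0\le s\le \|N_n\|}|F(s)-F(0)|.
\]
Both terms tend to zero, the second by continuity of $F$ at $0$. This is exactly $\{Z_n\}_n\sim_\sigma 0$.

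For the ``only if'' direction, I first extract a counting statement. Fix $\varepsilon>0$ and take a continuous $F_\varepsilon$ with $0\le F_\varepsilon\le 1$, $F_\varepsilon(0)=1$ and support in $[-\varepsilon,\varepsilon]$. The hypothesis gives $\frac1{d_n}\sum_i F_\varepsilon(\sigma_i(Z_n))\to 1$. Since $F_\varepsilon(\sigma)=0$ whenever $\sigma>\varepsilon$, it follows that
\[
\frac{\#\{i:\sigma_i(Z_n)>\varepsilon\}}{d_n}\to 0 \quad\text{for every fixed }\varepsilon>0 .
\]

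The main step is then a diagonal argument to let $\varepsilon$ depend on $n$. Choose indices $n_1<n_2<\cdots$ such that $\#\{i:\sigma_i(Z_n)>1/m\}\le d_n/m$ for all $n\ge n_m$, and set $\varepsilon_n=1/m$ for $n_m\le n<n_{m+1}$. Then $\varepsilon_n\to 0$ and the fraction of singular values above $\varepsilon_n$ tends to zero.

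Writing $Z_n=U_n\Sigma_nV_n^*$, I split $\Sigma_n=\Sigma_n^{>}+\Sigma_n^{\le}$ according to whether $\sigma_i>\varepsilon_n$. I then set $R_n=U_n\Sigma_n^{>}V_n^*$ and $N_n=U_n\Sigma_n^{\le}V_n^*$. This gives $\mathrm{rank}(R_n)/d_n\to 0$ and $\|N_n\|\le\varepsilon_n\to 0$, as required.

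For item 2, if $p=\infty$ the hypothesis says $\|Z_n\|\to 0$, and item 1 applies with $R_n=0$. For $p<\infty$, a Markov/Chebyshev counting bound gives, for every $\varepsilon>0$,
\[
\#\{i:\sigma_i(Z_n)>\varepsilon\}\le \varepsilon^{-p}\sum_i \sigma_i(Z_n)^p=\varepsilon^{-p}\|Z_n\|_p^p=o(d_n).
\]
This is precisely the counting statement obtained above. The same diagonal construction then yields the splitting $Z_n=R_n+N_n$, and item 1 gives $\{Z_n\}_n\sim_\sigma 0$.

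The only delicate point is the diagonal choice of $\varepsilon_n$ in the converse of item 1. The rest is routine bookkeeping with the singular value inequalities.
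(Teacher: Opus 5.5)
The paper gives no proof of this statement; it quotes it as a known result from \cite{book-glt-1,SS25}. Your proposal is correct and is the standard argument for this characterization:
\begin{itemize}
\item for the ``if'' direction, Weyl's inequality gives $\sigma_{i+r_n}(Z_n)\le\|N_n\|$;
\item for the ``only if'' direction, the counting property $\#\{i:\sigma_i(Z_n)>\varepsilon\}=o(d_n)$ is combined with a diagonally chosen threshold $\varepsilon_n\to 0$ and an SVD split;
\item for item 2, the Markov bound $\#\{i:\sigma_i(Z_n)>\varepsilon\}\le\varepsilon^{-p}\|Z_n\|_p^p$ does the work.
\end{itemize}
In item 2 the diagonal step is unnecessary. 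Inserting the fixed-$\varepsilon$ counting bound into your estimate from the ``if'' direction, and then letting $\varepsilon\to 0$, already yields $\{Z_n\}_n\sim_\sigma 0$.
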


\subsection{GLT sequences and Geometric means of matrices}\label{ssec:geom_means}

We are now ready to introduce the class of GLT sequences.

A Generalized Locally Toeplitz (GLT) sequence $\{A_{n}\}_n$ is a matrix-sequence equipped with a measurable function $f:[0,1] \times [-\pi,\pi] \to \mathbb{C}$ called GLT symbol. We use the notation $\{A_{n}\}_{n} \sim_{\text{GLT}} f$ to indicate that $\{A_{n}\}_{n}$ is a GLT sequence with symbol $f$. Here, we report the main properties of the $*$-algebra of GLT sequences. The definition of GLT sequences can be rather cumbersome and requires the introduction of many accessory tools. For this reason, we only state the main algebraic properties of GLT sequences. For a complete and recent account of the theory of GLT theory (also in the multi-level setting), we refer the reader to the two comprehensive books \cite{book-glt-1, book-glt-2} and to \cite{GLT-block1D,GLT-blockdD}.

\begin{enumerate}[label=\textbf{GLT \arabic*}]
\setcounter{enumi}{-1}
\item \label{GLT0} If $\{A_n\}_{n} \sim_{\text{GLT}} f$ then $\{A_n\}_n \sim_{\text{GLT}} g$ if and only if $f=g$ a.e.\\
If $f:[0,1] \times[-\pi,\pi] \to \mathbb{C}$ is measurable then there exists $\{A_n\}_n$ such that $\{A_n\}_n \sim_{\text{GLT}} f$;
\item \label{GLT1} if $\{A_{n}\}_{n} \sim_{\text{GLT}} f$ then $\{A_{n}\}_{n}\sim_{\sigma} \left(f,[0,1]\times[-\pi,\pi]\right)$. If $\{A_{n}\}_{n} \sim_{\text{GLT}} f$ and each $A_{n}$ is Hermitian then $\{A_{n}\}_{n}\sim_{\lambda} \left(f,[0,1]\times[-\pi,\pi]\right)$;
\item \label{GLT2} If $\{A_{n}\}_{n} \sim_{\text{GLT}} f$ and $A_{n}=X_{n}+ Y_{n}$, where
\begin{itemize}
    \item every $X_{n}$ is Hermitian,
    \item $\| Y_{n} \|_2/\sqrt{n} \to 0$ as $n \to \infty$,
\end{itemize}
then $\{A_{n}\}_{n} \sim_{\lambda} \left(f,[0,1] \times [-\pi,\pi]\right)$.
\item \label{GLT3} We have
\begin{itemize}
    \item $\{ T_{n}(g)\}_{n} \sim_{\text{GLT}} f(x,\theta)=g(\theta)$ if $g \in L^{1}\left([-\pi,\pi]\right)$;
    \item $\{ D_{n}(a)\}_{n} \sim_{\text{GLT}} f(x,\theta)=a(x)$ if $a$ is a continuous almost everywhere function;
    \item $\{ Z_{n}\}_{n} \sim_{\text{GLT}} f(x,\theta)=0$ if and only if $\{ Z_{n}\}_{n} \sim_{\sigma} 0$;
    \end{itemize}
    \item \label{GLT4}  if $\{A_{n}\}_{n}\sim_{\text{GLT}} f$, then
    \begin{itemize}
        \item $\{A^{*}_{n}\}_{n} \sim_{\text{GLT}} \bar{f}$;
        \item if, additionally, $f\neq 0$ a.e., then $\{A^{\dagger}_{n}\}_{n}\sim_{\text{GLT}} f^{-1}$;
    \end{itemize}
    \item \label{GLT5} if $\{A_{n}\}_{n}\sim_{\text{GLT}} f$ and $\{B_{n}\}_{n}\sim_{\text{GLT}} h$, then
    \begin{itemize}
        \item $\{\alpha A_{n}+\beta B_{n}\}_{n}  \sim_{\text{GLT}} \alpha f+\beta h$ for every $\alpha, \beta \in \mathbb{C}$;
        \item $\{A_{n} B_{n}\}_{n}  \sim_{\text{GLT}}  f h$;
    \end{itemize}
    
    \item \label{GLT6} $\{A_{n}\}_{n} \sim_{\text{GLT}} f$ if and only if there exist GLT sequences $\{B_{n,t}\}_{n} \sim_{\text{GLT}} f_t$ such that $\{\{B_{n,t}\}_{n}\}_t$ is an a.c.s. for $\{A_{n}\}_{n}$ and $f_t \to f$ in measure.
\end{enumerate}

We now recall the concept of geometric mean for two Hermitian positive definite matrices and connect this to the GLT theory. Since we are mainly interested in the spectral properties of the geometric mean of two matrix-sequences, we only give some results in this context and refer the reader to \cite{KA80} for additional details and further properties.

Given two Hermitian Positive Definite (HPD) matrices $A$ and $B$, their geometric mean is defined as
\begin{equation*}
    G(A,B)=A^{1/2} \left( A^{-1/2} B A^{-1/2} \right)^{1/2} A^{1/2} = G(B,A).
\end{equation*}
This definition was rigorously analyzed in \cite{ALM04}, where the authors also listed the main properties that a proper geometric mean should satisfy (see also \cite{M05,N09} for alternative definitions and generalizations to the case of geometric means of more than two matrices).

In the case of matrix-sequences, especially in the case of sequences in the GLT algebra, we may consider, and in fact it is quite crucial for the advancement in the present work, spectral properties of sequences of geometric means. In this regard, we have the following result from \cite{IKLS25}.

\begin{thm}
    Assume that $\{A_n\}\sim_{\text{GLT}} \kappa$ and $\{B_n\}\sim_{\text{GLT}} \xi $, where $A_n, B_n$ are Hermitian positive definite for every $n$. Then
    \begin{align*}
        \{G(A_n,B_n)\}\sim_{\text{GLT}} (\kappa \xi)^{1/2},\\
        \{G(A_n,B_n)\}\sim_{\sigma,\lambda} (\kappa \xi)^{1/2}.
    \end{align*}
\end{thm}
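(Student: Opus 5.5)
The plan is to build $G(A_n,B_n)$ step by step out of operations under which the GLT class is closed. Once the GLT symbol is known, the distribution results follow from \ref{GLT1}: each $G(A_n,B_n)$ is Hermitian (indeed HPD), so the GLT symbol gives both the singular value and the eigenvalue distribution. It therefore suffices to prove $\{G(A_n,B_n)\}_n\sim_{\text{GLT}}(\kappa\xi)^{1/2}$.

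\textbf{Step 1: a functional calculus lemma.} If $\{X_n\}_n\sim_{\text{GLT}} h$ with every $X_n$ Hermitian positive semidefinite, then $h\ge 0$ a.e. and $\{X_n^{1/2}\}_n\sim_{\text{GLT}} h^{1/2}$.
\begin{itemize}
\item \emph{Bounded case.} Suppose first $\sup_n\|X_n\|\le M$. Choose polynomials $p_t$ with $\sup_{[0,M]}|p_t(x)-\sqrt{x}|\le 1/t$ (Weierstrass). By \ref{GLT5}, $\{p_t(X_n)\}_n\sim_{\text{GLT}} p_t(h)$. Moreover $\|p_t(X_n)-X_n^{1/2}\|\le 1/t$, so $\{\{p_t(X_n)\}_n\}_t$ is an a.c.s. for $\{X_n^{1/2}\}_n$. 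Since $p_t(h)\to h^{1/2}$ in measure, \ref{GLT6} gives the claim.
\item \emph{General case.} Since $\{X_n\}_n\sim_\sigma h$ with $h$ finite a.e., for every $t$ the number of eigenvalues of $X_n$ exceeding a threshold $M_t$ is at most $c(t)d_n$ with $c(t)\to0$. Let $\phi_{M}(x)=\min(x,M)$ and replace $X_n$ by $\phi_{M_t}(X_n)$. This differs from $X_n$ by a matrix of small rank, and likewise $\phi_{M_t}(X_n)^{1/2}$ differs from $X_n^{1/2}$ by a matrix of small rank. Choosing $M_t$ also outside the atoms of the distribution of $h$, the same a.c.s. argument applied to $\phi_{M_t}(X_n)$ closes the proof.
\end{itemize}

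\textbf{Step 2: assembling the mean.} By Step 1, $\{A_n^{1/2}\}_n\sim_{\text{GLT}}\kappa^{1/2}$.
\begin{itemize}
\item Assuming $\kappa>0$ a.e., \ref{GLT4} gives $\{A_n^{-1/2}\}_n=\{(A_n^{1/2})^\dagger\}_n\sim_{\text{GLT}}\kappa^{-1/2}$.
\item By \ref{GLT5}, $C_n=A_n^{-1/2}B_nA_n^{-1/2}$ is HPD with symbol $\xi/\kappa$.
\item By Step 1 again, $\{C_n^{1/2}\}_n\sim_{\text{GLT}}(\xi/\kappa)^{1/2}$.
\item A final application of \ref{GLT5} gives $\{G(A_n,B_n)\}_n\sim_{\text{GLT}}\kappa^{1/2}(\xi/\kappa)^{1/2}\kappa^{1/2}=(\kappa\xi)^{1/2}$.
\end{itemize}
By the symmetry $G(A,B)=G(B,A)$, the same conclusion holds if instead $\xi>0$ a.e.

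\textbf{Step 3: the degenerate case (main obstacle).} The difficulty is when both $\kappa$ and $\xi$ vanish on sets of positive measure, because then \ref{GLT4} is not available. I would first try regularization: set $A_{n,t}=A_n+\tfrac1t I$, so that $\{A_{n,t}\}_n\sim_{\text{GLT}}\kappa+\tfrac1t>0$. By Step 2, $\{G(A_{n,t},B_n)\}_n\sim_{\text{GLT}}((\kappa+\tfrac1t)\xi)^{1/2}$, and this symbol tends to $(\kappa\xi)^{1/2}$ in measure. It remains to show that $\{\{G(A_{n,t},B_n)\}_n\}_t$ is an a.c.s. for $\{G(A_n,B_n)\}_n$.
\begin{itemize}
\item Monotonicity of the geometric mean (\cite{KA80}) gives $0\le G(A_{n,t},B_n)-G(A_n,B_n)$.
\item To bound this difference, first truncate $B_n$ spectrally at level $M_t$, as in Step 1; this changes the matrices only by a small-rank term. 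For $\|B_n\|\le M_t$, the estimate $G(A+\varepsilon I,B)-G(A,B)\le G(\varepsilon I,B)\le(\varepsilon M_t)^{1/2}I$ would make the difference small in norm. The second inequality is immediate. The first should follow from subadditivity-type properties of the Kubo--Ando mean, and I expect it to be the delicate point.
\item If it fails, I would fall back on the Schatten-norm criterion for zero-distributed sequences applied to the difference, using the trace bound $\operatorname{tr}G(\varepsilon I,B_n)\le(\varepsilon M_t)^{1/2}d_n$.
\end{itemize}
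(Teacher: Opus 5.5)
The paper does not prove this statement; it quotes it from \cite{IKLS25}. Your Steps 1 and 2 are sound. They establish the result whenever $\kappa>0$ a.e.\ or $\xi>0$ a.e., which is the only case the paper uses, since its symbols have positive essential infimum.

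Step 1 needs one small repair in the general case. You never show that $\{\phi_{M_t}(X_n)\}_n$ is a GLT sequence, but you do not need it. Apply $p_t$, a uniform $1/t$-approximant of $\sqrt{\cdot}$ on $[0,M_t]$, directly to $X_n$. By the spectral theorem, $p_t(X_n)-X_n^{1/2}$ splits into a part of norm at most $1/t$ and a part whose rank is the number of eigenvalues of $X_n$ above $M_t$. Moreover $p_t(h)\to h^{1/2}$ in measure, because $\mathcal{L}\{h>M_t\}\to 0$.

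Step 3 is a gap that cannot be closed: the statement is false when $\kappa$ and $\xi$ vanish on a common set of positive measure. For $\varepsilon>0$ let
\[
\hat B_\varepsilon=\begin{pmatrix}\varepsilon&0\\0&1\end{pmatrix},\qquad
\hat G_\varepsilon=\begin{pmatrix}2\varepsilon&\sqrt{\varepsilon}\\ \sqrt{\varepsilon}&1\end{pmatrix},\qquad
\hat A_\varepsilon=\hat G_\varepsilon\hat B_\varepsilon^{-1}\hat G_\varepsilon=\begin{pmatrix}5\varepsilon&3\sqrt{\varepsilon}\\ 3\sqrt{\varepsilon}&2\end{pmatrix}.
\]
All three are HPD. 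Since $\hat G_\varepsilon$ is the HPD solution of $Z\hat B_\varepsilon^{-1}Z=\hat A_\varepsilon$, we have $G(\hat A_\varepsilon,\hat B_\varepsilon)=\hat G_\varepsilon$.

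Take $n=2m$ (append a $1\times 1$ block for odd $n$), $\varepsilon_n\to 0$, $A_n=\hat A_{\varepsilon_n}\otimes I_m$ and $B_n=\hat B_{\varepsilon_n}\otimes I_m$. Let $a(x)=0$ for $x\le 1/2$ and $a(x)=1$ for $x>1/2$, so that $D_n(a)=\diag(0,1)\otimes I_m$. Then $A_n-2D_n(a)$, $B_n-D_n(a)$ and $G(A_n,B_n)-D_n(a)=(\hat G_{\varepsilon_n}-\diag(0,1))\otimes I_m$ all tend to $0$ in norm. Hence $\kappa=2a$ and $\xi=a$, yet $\{G(A_n,B_n)\}_n\sim_{\text{GLT}} a\neq \sqrt{2}\,a=(\kappa\xi)^{1/2}$. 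Half of the eigenvalues of $G(A_n,B_n)$ cluster at $1$, not at $\sqrt{2}$.

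In this example your Step 2 correctly gives $\{G(A_n+\tfrac1t I,B_n)\}_n\sim_{\text{GLT}}\sqrt{2+1/t}\,a$. For large $n$, the difference $G(A_n+\tfrac1t I,B_n)-G(A_n,B_n)$ has $n/2$ eigenvalues close to $\sqrt{2+1/t}-1>0.4$, even though $\|B_n\|\le 1$. This breaks all three parts of Step 3:
\begin{itemize}
\item The Loewner bound $G(A+\varepsilon I,B)-G(A,B)\le G(\varepsilon I,B)$ is false. After congruence by $B^{-1/2}$ it amounts to $(X+Y)^{1/2}\le X^{1/2}+Y^{1/2}$ for arbitrary HPD $X,Y$. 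This already fails near two rank-one projections $P,Q$ at an acute angle, because $P+Q$ has an eigenvalue $\lambda\in(0,1)$ and $\sqrt{\lambda}>\lambda$.
\item The trace fallback also fails: the trace of the difference is about $\tfrac n2(\sqrt{2+1/t}-1)$, not $O(t^{-1/2}n)$.
\item $\{\{G(A_n+\tfrac1t I,B_n)\}_n\}_t$ is not an a.c.s.\ for $\{G(A_n,B_n)\}_n$.
\end{itemize}

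The underlying reason is that $G$ is discontinuous at pairs of singular matrices with a common kernel. What you have proved is the correct form of the statement, with a hypothesis such as $\kappa\neq 0$ a.e.\ or $\xi\neq 0$ a.e. No regularization argument can remove that hypothesis.
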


\section{Proof of the main result}\label{sec:main}

In this section, we prove our main result, Theorem \ref{main_thm}. In order to prove the theorem, we need some auxiliary results dealing with the approximation, in the Frobenius norm, of geometric means of Toeplitz matrices.

\begin{lem} \label{lemma_XY} 
    Let $ X $, $ Y $, $ \tilde{ X } $, $ \tilde{Y} $ be square matrices; then
    \begin{equation*}
        \norm{ X Y - \tilde{ X } \tilde{ Y } }_{ F } \leq \norm{ X }_{ S, \infty } \norm{ Y - \tilde{ Y } }_{ F } + \norm{ \tilde{ Y } }_{ S, \infty } \norm{ X - \tilde{ X } }_{ F }. 
    \end{equation*}
\end{lem}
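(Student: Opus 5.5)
The plan is the standard add-and-subtract argument, closed with the triangle inequality and the mixed norm inequality $\norm{AB}_F \le \norm{A}_{S,\infty}\norm{B}_F$ (and symmetrically $\norm{AB}_F \le \norm{A}_F\norm{B}_{S,\infty}$). Here $\norm{\cdot}_{S,\infty}$ denotes the Schatten $\infty$-norm, i.e.\ the spectral norm, and $\norm{\cdot}_F$ the Frobenius norm (Schatten $2$-norm).

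Insert and subtract the mixed term $X\tilde Y$ to get
\begin{equation*}
XY - \tilde X\tilde Y = X(Y-\tilde Y) + (X-\tilde X)\tilde Y .
\end{equation*}
By the triangle inequality for $\norm{\cdot}_F$,
\begin{equation*}
\norm{XY-\tilde X\tilde Y}_F \le \norm{X(Y-\tilde Y)}_F + \norm{(X-\tilde X)\tilde Y}_F .
\end{equation*}
The first term is bounded by $\norm{X}_{S,\infty}\norm{Y-\tilde Y}_F$ and the second by $\norm{X-\tilde X}_F\norm{\tilde Y}_{S,\infty}$, which is exactly the claimed bound. The choice of $X\tilde Y$ as the intermediate term, rather than $\tilde X Y$, is what puts $X$ in front of $Y-\tilde Y$ and $\tilde Y$ next to $X-\tilde X$.

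The only step needing justification is the mixed inequality. Write $\norm{AB}_F^2=\sum_j \norm{Ab_j}_2^2$, where $b_j$ are the columns of $B$. Each term satisfies $\norm{Ab_j}_2^2 \le \norm{A}_{S,\infty}^2\norm{b_j}_2^2$, so $\norm{AB}_F^2 \le \norm{A}_{S,\infty}^2\norm{B}_F^2$. For the right-multiplication version, apply the same argument to $(AB)^* = B^*A^*$, using that both the Frobenius and spectral norms are invariant under conjugate transposition.

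There is no real obstacle. The one point to check is the order of factors: the spectral-norm factor must sit on the correct side, and the column argument above covers both left and right multiplication.
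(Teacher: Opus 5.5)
Your proof is correct and matches the paper's argument: the same splitting $XY-\tilde X\tilde Y = X(Y-\tilde Y)+(X-\tilde X)\tilde Y$, the triangle inequality, and the mixed bound $\norm{AB}_F\le\norm{A}_{S,\infty}\norm{B}_F$. The paper simply cites that bound as a discrete H\"older inequality, while you verify it directly via columns and conjugate transposition.
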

\begin{proof}
    By a straightforward computation, we have
    \begin{align*}
        \norm{ X Y - \tilde{ X } \tilde{ Y } }_{ F } &= \norm{ X Y - X \tilde{ Y } + X \tilde{ Y } - \tilde{ X } \tilde{ Y } }_{ F } = \norm{ X \left( Y - \tilde{ {Y} } \right) + \left( X - \tilde{ X } \right) \tilde{ Y } }_{ F } \\
         & \leq \norm{ X }_{ S, \infty } \norm{ Y - \tilde{ Y } }_{ F } + \norm{ \tilde{ Y } }_{ S, \infty } \norm{ X - \tilde{ X } }_{ F },
    \end{align*}
    where the latter inequality follows from the discrete form of the generalized Hölder's inequality; the thesis is then proven.
\end{proof}

The following series of lemmas shows that the geometric means of uniformly bounded sequences is still uniformly bounded and gives useful estimates of differences of geometric means in the Frobenius norm, which will be used for our main result.

\begin{lem} \label{schatten_geom} 
    Let $ f, g \in L^{ \infty } \left( \left[ -\pi, \pi \right] \right) $ be real-valued almost everywhere functions such that $ \essinf f, \essinf g > 0 $. \\
    Then there exist  positive constants $ C_{1} $ and $ C_{2} $, both independent of $ n $, such that
    \begin{equation*}
        \norm{ G \left( T_{ n } \left( f \right), T_{ n } \left( g \right) \right) }_{ S, \infty } < C_1
    \end{equation*}
    and
    \begin{equation*}
        \norm{ G \left( T_{ n } \left( f \right)^{-1}, T_{ n } \left( g \right) \right) }_{ S, \infty } < C_2.
    \end{equation*}
\end{lem}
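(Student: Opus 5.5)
The plan is to use the spectral bounds on Toeplitz matrices generated by bounded, essentially positive functions, together with the monotonicity of the geometric mean. Set $m_f=\essinf f$, $M_f=\|f\|_{L^\infty}$, and define $m_g, M_g$ in the same way. Since $f$ is real-valued, $T_n(f)$ is Hermitian. The standard Toeplitz localization result (Szeg\H{o}, or directly from $x^*T_n(f)x=\frac{1}{2\pi}\int f(\theta)|\sum_j x_j e^{\i j\theta}|^2\,d\theta$) gives
\[
m_f I_n \le T_n(f) \le M_f I_n
\]
in the Loewner order, for every $n$; the same holds for $g$. In particular both matrices are HPD, so the geometric means are well defined. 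Inverting reverses the order, so
\[
M_f^{-1} I_n \le T_n(f)^{-1} \le m_f^{-1} I_n .
\]

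Next I would use the monotonicity of the Kubo--Ando mean \cite{KA80}: if $A\le A'$ and $B\le B'$, then $G(A,B)\le G(A',B')$. Together with $G(\alpha I,\beta I)=\sqrt{\alpha\beta}\, I$, this gives
\[
0< G(T_n(f),T_n(g)) \le \sqrt{M_fM_g}\, I_n, \qquad 0<G(T_n(f)^{-1},T_n(g))\le \sqrt{M_g/m_f}\, I_n .
\]
Since geometric means of HPD matrices are HPD, the spectral norm equals the largest eigenvalue. So one can take $C_1$ to be any constant strictly larger than $\sqrt{M_fM_g}$ and $C_2$ any constant strictly larger than $\sqrt{M_g/m_f}$ (the statement uses strict inequalities). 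Neither depends on $n$.

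If I wanted to avoid quoting monotonicity, I would prove it directly. Write $G(A,B)=A^{1/2}(A^{-1/2}BA^{-1/2})^{1/2}A^{1/2}$. From $B\le \beta I$ we get $A^{-1/2}BA^{-1/2}\le \beta A^{-1}$. The operator monotonicity of the square root (L\"owner--Heinz) then gives $(A^{-1/2}BA^{-1/2})^{1/2}\le \sqrt{\beta}A^{-1/2}$. Conjugating by $A^{1/2}$ yields $G(A,B)\le\sqrt{\beta}A^{1/2}\le \sqrt{\beta\alpha}\,I$ when $A\le\alpha I$.

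The main point needing care is this operator-monotonicity step. A naive submultiplicative estimate avoids it: $\|G(A,B)\|\le \|A^{1/2}\|^2\|A^{-1/2}BA^{-1/2}\|^{1/2}$ is bounded by $\|A\|\,\|A^{-1}\|^{1/2}\|B\|^{1/2}$. This is also uniform in $n$, because $\|T_n(f)\|\le M_f$ and $\|T_n(f)^{-1}\|\le m_f^{-1}$. I would keep it as a fallback that avoids L\"owner--Heinz entirely, at the cost of worse constants.
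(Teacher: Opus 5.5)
Your proof is correct. The ``fallback'' you sketch at the end is exactly the paper's proof: it bounds $\|G(A,B)\|_{S,\infty}\le\|A\|_{S,\infty}\|A^{-1}\|_{S,\infty}^{1/2}\|B\|_{S,\infty}^{1/2}$ by submultiplicativity, and uses the analogous estimate with $A=T_n(f)^{-1}$. It then inserts $\|T_n(f)\|_{S,\infty}\le\|f\|_\infty$ and $\lambda_{\min}(T_n(f))\ge\essinf f$.

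Your main route is genuinely different. It combines the Loewner bounds $m_fI_n\le T_n(f)\le M_fI_n$ with monotonicity of the Kubo--Ando mean, or directly with L\"owner--Heinz, and the direct chain $A^{-1/2}BA^{-1/2}\le\beta A^{-1}\Rightarrow G(A,B)\le\sqrt{\beta}A^{1/2}$ is verified correctly. It needs one extra nontrivial ingredient, operator monotonicity of $t\mapsto t^{1/2}$, rather than mere submultiplicativity. In return it gives sharper, symmetric constants: $\sqrt{M_fM_g}$ and $\sqrt{M_g/m_f}$, against the paper's $\|f\|_\infty\|g\|_\infty^{1/2}(\essinf f)^{-1/2}$ and $\|f\|_\infty^{1/2}\|g\|_\infty^{1/2}(\essinf f)^{-1}$.

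More significantly, your first bound uses no lower bound on $f$ or $g$ at all. It only needs the Toeplitz matrices to be HPD so that $G$ is defined, whereas the paper's constant carries a factor $(\essinf f)^{-1/2}$. This bears on the remark in the paper's conclusions that $\essinf>0$ is what controls the spectral norm of the geometric means. For $G(T_n(f),T_n(g))$ that hypothesis is not needed. For $G(T_n(f)^{-1},T_n(g))$ some dependence on $\essinf f$ is needed in general, since its GLT symbol is $\sqrt{g/f}$.
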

\begin{proof}
    By definition of geometric mean, we have 
    \begin{align*}
        \norm{ G \left( T_{ n } \left( f \right), T_{ n } \left( g \right) \right) }_{ S, \infty } &\leq \norm{ T_{ n } \left( f \right) }_{ S, \infty } \norm{ T_{ n } \left( f \right)^{-1/2} _{ n } T_n \left( g \right) T_n \left( f \right)^{-1/2} }_{ S, \infty }^{ 1 / 2 } \\
        & \leq \norm{ T_{ n } \left( f \right) }_{ S, \infty } \norm{ T_{ n } \left( f \right)^{-1} }_{ S, \infty }^{1/2} \norm{ T_{ n } \left( g \right) }_{ S, \infty }^{1/2} \\
        & \leq \frac{1}{\sqrt{\lambda_{\min} \left( T_n \left(f \right) \right)}} \norm{ T_{ n } \left( f \right) }_{ S, \infty } \norm{ T_{ n } \left( g \right) }_{ S, \infty }^{1/2}.
    \end{align*}
    By \cite[Theorem 6.2]{book-glt-1} the Schatten-$\infty$ norm of $ T_{ n } \left( f \right) $ is bounded by $ \norm{ f }_{ \infty } $, and the same applies to $ \norm{ T_{ n } \left( g \right) }_{ S, \infty } $. Moreover, \cite[Theorem 6.1]{book-glt-1} guaranties $\lambda_{\min} \left( T_n \left(f \right) \right) \geq \essinf f > 0 $, so that the first statement of the thesis follows. \\
    Again using the fact that the spectrum of $ T_{ n } \left( f \right) $ is contained in the essential range of $ f $, so that $ 0 < \essinf f < \lambda_{\min} \left( T_{ n} \left( f \right) \right) < \esssup f $, we get
    \begin{align*}
        \norm{ G \left( T_{ n } \left( f \right)^{ - 1 }, T_{ n } \left( g \right) \right) }_{ S, \infty } &\leq \norm{ T_{ n } \left( f \right)^{-1/2} }_{ S, \infty }^{ 2 } \norm{ T_{ n } \left( f \right)^{1/2} _{ n } T_n \left( g \right) T_n \left( f \right)^{1/2} }_{ S, \infty }^{ 1 / 2 } \\
        & \leq \frac{ 1 }{ \lambda_{\min} \left( T_n \left( f \right) \right) }\norm{ T_{ n } \left( f \right) }_{ S, \infty }^{ 1 / 2 } \norm{ T_{ n } \left( g \right) }_{ S, \infty }^{ 1 / 2 },
    \end{align*}
    leading to the thesis.
\end{proof}

\begin{defn} \label{def_approx_circ} 
    Let $ f \in L^{ \infty } \left( \left[ -\pi, \pi \right] \right) $ real valued almost everywhere; we denote by $ \left\lbrace C_{ n } \left( f \right) \right\rbrace_n $ the sequence of circulant matrices such that $ C_{ n } \left( f \right)= \arg\min_{X \in \mathcal{C}(n)} \norm{ X - T_{ n } \left( f \right) }_{F} $, where $\mathcal{C}(n)$ denotes the set of all circulant matrices of dimension $n$.    
    The existence of such an approximation is well-known in the literature as Chan preconditioner (see, e.g., \cite{WZ17} and references therein).
    Moreover, whenever $ T_{ n } \left( f \right) $ is Hermitian positive definite, then $ C_{ n } \left( f \right) $ is Hermitian positive definite.    
\end{defn}
\begin{lem} \label{lemma_2}
    Let $ f, g \in L^{ \infty } \left( \left[ -\pi, \pi \right] \right) $ be  real-valued a.e. functions such that $ \essinf f>0, \essinf g > 0 $; then
    \begin{equation*}
        \norm{ G \left( T_{ n } \left( f \right), T_{ n } \left( g \right) \right) - G \left( C_{ n } \left( f \right), C_{ n } \left( g \right) \right) }_{ F } = o \left( \sqrt{ n } \right)
    \end{equation*}
    and
    \begin{equation*}
        \norm{ G \left( T_{ n } \left( f \right)^{ -1 }, T_{ n } \left( g \right) \right) - G \left( C_{ n } \left( f \right)^{ -1 }, C_{ n } \left( g \right) \right) }_{ F } = o \left( \sqrt{ n } \right).
    \end{equation*}
\end{lem}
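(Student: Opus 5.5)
We plan to reduce the statement to known Frobenius-norm estimates for the Chan circulant approximation, which are then propagated through each operation in the definition of $G$. The basic input, classical for the Chan preconditioner (see \cite{WZ17}), is that for $f\in L^\infty$ (hence $f\in L^2$) one has
\[
\norm{T_n(f)-C_n(f)}_F = o(\sqrt n).
\]
To see this, write the optimal circulant explicitly, with diagonals $\frac{(n-k)\tilde f_k + k\tilde f_{k-n}}{n}$. Then $\norm{T_n(f)-C_n(f)}_F^2/n$ is bounded by a Ces\`aro-type sum $\sum_{|k|<n}\frac{|k|}{n}|\tilde f_k|^2$ plus similar terms. These tend to zero by Parseval and dominated convergence, since $\sum_k|\tilde f_k|^2<\infty$. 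Positivity is also available: $C_n(f)$ is HPD with eigenvalues in $[\essinf f,\esssup f]$, because $C_n(f)=F_n^*\diag(F_nT_n(f)F_n^*)F_n$ and its eigenvalues are Rayleigh quotients of $T_n(f)$. So every matrix involved ($T_n$, $C_n$, their inverses and their square roots) has spectrum in a fixed compact interval $[m,M]\subset(0,\infty)$ independent of $n$.

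The key tool is a Lipschitz estimate in Frobenius norm for matrix functions on HPD matrices with spectrum in $[m,M]$. If $h$ is $C^1$ on a neighbourhood of $[m,M]$, then
\[
\norm{h(A)-h(B)}_F\le L\,\norm{A-B}_F .
\]
We prove it by writing $h(A)-h(B)$ in the eigenbases: $U^*(h(A)-h(B))V$ has entries $\frac{h(\lambda_i)-h(\mu_j)}{\lambda_i-\mu_j}(U^*(A-B)V)_{ij}$, which is the Daleckii--Krein divided-difference argument. The divided differences are bounded by $\sup|h'|$ on $[m,M]$, and unitary invariance of $\norm{\cdot}_F$ gives the estimate. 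We apply this with $h(x)=x^{1/2}$, $x^{-1/2}$ and $x^{-1}$. Along the way we will need uniform spectral bounds for $A^{-1/2}BA^{-1/2}$, namely spectrum in $[m/M,M/m]$, and similar bounds for the circulant counterparts.

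The argument then chains these estimates. For the first statement, set $A=T_n(f)$, $B=T_n(g)$, $\tilde A=C_n(f)$, $\tilde B=C_n(g)$. By the Lipschitz estimate, $\norm{A^{\pm1/2}-\tilde A^{\pm1/2}}_F=o(\sqrt n)$. Applying Lemma \ref{lemma_XY} twice gives $\norm{A^{-1/2}BA^{-1/2}-\tilde A^{-1/2}\tilde B\tilde A^{-1/2}}_F=o(\sqrt n)$, since all spectral norms are uniformly bounded. Both middle matrices are HPD with spectra in a common compact interval away from $0$, so the Lipschitz estimate for $x^{1/2}$ transfers the $o(\sqrt n)$ bound to their square roots. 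Two more applications of Lemma \ref{lemma_XY}, using the uniform bounds of Lemma \ref{schatten_geom} type, namely $\norm{A^{1/2}}_{S,\infty}\le M^{1/2}$ and the analogous bound on the square root factor, give the first claim. The second claim is identical after replacing $A$ by $A^{-1}$ and $\tilde A$ by $\tilde A^{-1}$. Here $\norm{A^{-1}-\tilde A^{-1}}_F\le \norm{A^{-1}}\norm{A-\tilde A}_F\norm{\tilde A^{-1}}=o(\sqrt n)$, and the spectra of $A^{-1},\tilde A^{-1}$ lie in $[1/M,1/m]$.

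The main obstacle is the Frobenius-Lipschitz property of the square root, because Lemma \ref{lemma_XY} only handles products. Without it one would only get operator-norm or Hölder-$1/2$ continuity, and that would give $o(n^{1/4})$-type losses that do not yield $o(\sqrt n)$. The divided-difference argument resolves this cleanly because the spectra are uniformly separated from $0$, which is exactly where the hypothesis $\essinf>0$ enters. A secondary point to check carefully is the $o(\sqrt n)$ estimate for $T_n(f)-C_n(f)$ when $f$ is merely $L^\infty$ rather than Wiener-class. We would handle it with the Ces\`aro/Parseval argument above, approximating $f$ in $L^2$ by trigonometric polynomials if needed.
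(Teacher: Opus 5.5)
Your proposal is correct and follows essentially the same route as the paper: you telescope $G(A,B)-G(\tilde A,\tilde B)$ with Lemma \ref{lemma_XY}, pass the Frobenius estimates through the square roots using the Lipschitz continuity of $t\mapsto\sqrt t$ on spectra uniformly bounded away from $0$, and repeat the argument with $A^{-1}$ for the second claim. Beyond the paper, you also justify two steps it takes for granted: the Frobenius-norm Lipschitz property of the matrix square root (via the Daleckii--Krein divided-difference identity), and the bound $\norm{T_n(f)-C_n(f)}_F=o(\sqrt n)$ together with the spectral localization of $C_n(f)$.
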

\begin{proof}
    For the sake of a lighter notation, let $ A = T_{ n } \left( f \right) $, $ B = T_{ n } \left( g \right) $ and $ \tilde{ A } = C_{ n } \left( f \right) $, $ \tilde{ B } = C_{ n } \left( g \right) $. \\
    We have
    \begin{align*}
        \norm{ G \left( A, B \right) - G \left( \tilde{ A }, \tilde{ B } \right) }_{ F } &= \norm{A^{1/2}\left( A^{-1/2}B A^{-1/2} \right)^{1/2} A^{1/2} - \tilde{A}^{1/2}\left( \tilde{A}^{-1/2} \tilde{B} \tilde{A}^{-1/2} \right)^{1/2} \tilde{A}^{1/2}}_F\\
        &\leq \norm{A^{1/2}-\tilde{A}^{1/2}}_F \norm{\left( A^{-1/2}B A^{-1/2} \right)^{1/2} A^{1/2}}_{S,\infty} \\
        &+ \norm{\tilde{A}^{1/2}}_{S,\infty} \norm{A^{1/2}}_{S,\infty} \norm{\left( A^{-1/2}B A^{-1/2} \right)^{1/2} - \left( \tilde{A}^{-1/2} \tilde{B} \tilde{A}^{-1/2} \right)^{1/2}}_{F} \\
        &+ \norm{A^{1/2}-\tilde{A}^{1/2}}_F \norm{\tilde{A}^{1/2}}_{S,\infty} \norm{\left( \tilde{A}^{-1/2} \tilde{B} \tilde{A}^{-1/2} \right)^{1/2}}_{S,\infty}.
    \end{align*}
Since all the terms in the Schatten-$\infty$ norm are uniformly bounded, we only need to estimate $\norm{A^{1/2}-\tilde{A}^{1/2}}_F$ and $\norm{\left( A^{-1/2}B A^{-1/2} \right)^{1/2} - \left( \tilde{A}^{-1/2} \tilde{B} \tilde{A}^{-1/2} \right)^{1/2}}_{F}$ to conclude. Since $\Lambda(A) \subset [\essinf f, \esssup f]$, by the Lipschitz continuity of $t \mapsto \sqrt{t}$ on sets bounded away from $0$, we have
\begin{equation*}
    \norm{A^{1/2}-\tilde{A}^{1/2}}_F \leq L \norm{A-\tilde{A}}_F = o(\sqrt{n}).
\end{equation*}
Moreover,
\begin{equation*}
    A^{-1/2}B A^{-1/2} \sim A^{-1/2} A^{-1/2} B A^{-1/2} A^{1/2} = A^{-1} B,
\end{equation*}
so that $\Lambda (A^{-1/2}BA^{-1/2}) \subset [\essinf(g/f),\esssup(g/f)]$ (see \cite{BS99}), and
\begin{equation*}
    \tilde{A}^{-1/2}\tilde{B} \tilde{A}^{-1/2} = \tilde{A}^{-1} \tilde{B},
\end{equation*}
so that $\Lambda (\tilde{A}^{-1/2}\tilde{B}\tilde{A}^{-1/2}) \subset [\essinf(g/f),\esssup(g/f)]$.
Using again the Lipschitz continuity of $t \mapsto \sqrt{t}$ on sets bounded away from $0$,
\begin{align*}
    \norm{\left( A^{-1/2}B A^{-1/2} \right)^{1/2} - \left( \tilde{A}^{-1/2} \tilde{B} \tilde{A}^{-1/2} \right)^{1/2}}_{F} &\leq L \norm{ A^{-1/2}B A^{-1/2} -  \tilde{A}^{-1/2} \tilde{B} \tilde{A}^{-1/2}}_{F} \\
    &\leq L \norm{A^{-1/2}-\tilde{A}^{-1/2}}_{F} \norm{BA^{-1/2}}_{S,\infty} \\
    &+ L\norm{\tilde{A}^{-1/2}}_{S,\infty} \norm{A^{-1/2} }_{S,\infty} \norm{B-\tilde{B}}_{F}\\
    &+ L \norm{A^{-1/2}-\tilde{A}^{-1/2}}_{F} \norm{A^{-1/2} B}_{S,\infty}.
\end{align*}
Noticing that $A^{-1/2}-\tilde{A}^{-1/2} = A^{-1/2}\left(\tilde{A}^{1/2}-A^{1/2}\right)\tilde{A}^{-1/2}$ and proceeding as above, we get
\begin{equation*}
    \norm{\left( A^{-1/2}B A^{-1/2} \right)^{1/2} - \left( \tilde{A}^{-1/2} \tilde{B} \tilde{A}^{-1/2} \right)^{1/2}}_{F} = o(\sqrt{n}),
\end{equation*}
as desired to conclude.

    The same technique with minimal variations can be applied to infer the second statement.
\end{proof}
\begin{cor} \label{cor_1} 
    Let $f_j,g_j$, $j=1,\dots,k$, be real-valued a.e. functions such that $\essinf f_j>0, \essinf g_j > 0$ for every $j$. Then
    \begin{equation*}
        \norm{ \prod_{ j = 1 }^{ k } G \left( T_{ n } \left( f_j \right), T_{ n } \left( g_j \right) \right) - \prod_{ j = 1 }^{ k } G \left( C_{ n } \left( f_j \right), C_{ n } \left( g_j \right) \right) }_{ F } = o \left( \sqrt{ n } \right),
    \end{equation*}
    and 
   \begin{equation*}
   \norm{ \prod_{ j = 1 }^{ k } G \left( T_{ n } \left( f_j \right)^{ -1 }, T_{ n } \left( g_j \right) \right) - \prod_{ j = 1 }^{ k } G \left( C_{ n } \left( f_j \right)^{ -1 }, C_{ n } \left( g_j \right) \right) }_{ F } = o \left( \sqrt{ n } \right).
   \end{equation*}
\end{cor}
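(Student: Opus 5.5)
The plan is to prove Corollary \ref{cor_1} by induction on $k$, telescoping the product with Lemma \ref{lemma_XY} and controlling each factor with Lemmas \ref{schatten_geom} and \ref{lemma_2}. Write $X_j = G(T_n(f_j),T_n(g_j))$ and $\tilde X_j = G(C_n(f_j),C_n(g_j))$. I will implicitly assume $f_j,g_j\in L^\infty([-\pi,\pi])$, as in the preceding lemmas. The case $k=1$ is exactly the first statement of Lemma \ref{lemma_2}.

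For the inductive step, set $X = X_1\cdots X_{k-1}$, $Y = X_k$, and similarly $\tilde X$, $\tilde Y$. Lemma \ref{lemma_XY} gives
\begin{equation*}
\norm{XY-\tilde X\tilde Y}_F \leq \norm{X}_{S,\infty}\norm{X_k-\tilde X_k}_F + \norm{\tilde X_k}_{S,\infty}\norm{X-\tilde X}_F .
\end{equation*}
By Lemma \ref{schatten_geom} and submultiplicativity, $\norm{X}_{S,\infty}\leq C^{k-1}$ uniformly in $n$. The factor $\norm{X_k-\tilde X_k}_F$ is $o(\sqrt n)$ by Lemma \ref{lemma_2}, and $\norm{X-\tilde X}_F$ is $o(\sqrt n)$ by the induction hypothesis. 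Because $k$ is fixed, a bounded constant times $o(\sqrt n)$ remains $o(\sqrt n)$, and the induction closes.

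The only step that needs genuine care is the uniform bound on $\norm{\tilde X_k}_{S,\infty}$, the geometric mean of the circulant approximations. Lemma \ref{schatten_geom} covers only Toeplitz matrices, so this bound is not given directly. I would first run the same estimate as in the proof of Lemma \ref{schatten_geom}, with $C_n(f)$ in place of $T_n(f)$. This requires two facts about the Chan approximation: $\norm{C_n(f)}_{S,\infty}\leq \esssup f$, and $\lambda_{\min}(C_n(f))\geq \essinf f$.

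Both facts follow from the same observation. The eigenvalues of $C_n(f)$ are the diagonal entries $(F_n^* T_n(f) F_n)_{jj}$, where $F_n$ is the Fourier matrix. Each such entry is a Rayleigh quotient of $T_n(f)$, so it lies in $[\lambda_{\min}(T_n(f)),\lambda_{\max}(T_n(f))]\subset[\essinf f,\esssup f]$. This gives the two facts above, and hence the same constants $C_1, C_2$ as in Lemma \ref{schatten_geom}.

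With this in hand, the second statement follows from the identical induction. Replace $T_n(f_j)$ by $T_n(f_j)^{-1}$ and $C_n(f_j)$ by $C_n(f_j)^{-1}$, and use the second bound of Lemma \ref{schatten_geom} together with the second statement of Lemma \ref{lemma_2}. For the inverse circulant, the eigenvalue bound gives $\norm{C_n(f_j)^{-1}}_{S,\infty}\leq 1/\essinf f_j$, which keeps the corresponding geometric means uniformly bounded in the same way.
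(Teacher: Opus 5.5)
Your proof is correct and follows the paper's own argument: induction on $k$ via Lemma \ref{lemma_XY}, with the Toeplitz product bounded by Lemma \ref{schatten_geom} and the last factor's error controlled by Lemma \ref{lemma_2}. The one genuine addition is your Rayleigh-quotient argument that the eigenvalues of the Chan approximation $C_n(f)$ lie in $[\essinf f,\esssup f]$. This supplies the uniform bound on $\norm{\tilde X_k}_{S,\infty}$ (and on the inverse variant), which the paper simply asserts without proof.
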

\begin{proof}
    Let $ G_{ j } = G \left( T_{ n } \left( f_j \right), T_{ n } \left( g_j \right) \right) $ and $ \tilde{ G }_{ j } = G \left( C_{ n } \left( f_j \right), C_{ n }\left( g_j \right) \right) $; we proceed by induction on $k$. \\
    For $k=1$, the thesis reduces to the expression in Lemma \ref{lemma_2}. Let $k>1$. By Lemma \ref{lemma_XY}, we have 
    \begin{align*}
        \norm{\prod_{j=1}^{k} G_{ j } - \prod_{j=1}^{k} \tilde{ G }_{ j } }_{ F } & = \norm{ \left(\prod_{j=1}^{k-1} G_{ j } \right) G_{k} - \left(\prod_{j=1}^{k-1} \tilde{ G }_{ j }\right) \tilde{ G }_{ k } }_{ F } 
        \\
        &\leq \norm{ \prod_{j=1}^{k-1} G_{ j } }_{ S, \infty } \norm{ G_{ k } - \tilde{ G }_{ k } }_{ F } + \norm{ \tilde{ G }_{ k } }_{ S, \infty } \norm{ \prod_{j=1}^{k-1} G_{ j } - \prod_{j=1}^{k-1} \tilde{ G }_{ j } }_{ F }
    \end{align*}
    By Lemma \ref{schatten_geom}, Lemma \ref{lemma_2} and the sub-multiplicativity of the Schatten-$\infty$ norm, $ \norm{\prod_{j=1}^{k-1} G_{ j } }_{ S, \infty } \leq C $ for some $ C > 0 $ and $\norm{ G_k - \tilde{G}_k} = o (\sqrt{n})$; moreover, by induction, $ \norm{ \prod_{j=1}^{k-1} G_{ j } - \prod_{j=1}^{k-1} \tilde{ G }_{ j } }_{ F } = o \left( \sqrt{ n } \right) $ and $\norm{\tilde{G}_k}_{S, \infty}$ is uniformly bounded in $n$; hence, the first part of the thesis follows.  \\
    As above, the same procedure allows us to infer the second statement.
\end{proof}

\begin{lem} \label{lemma_1} 
    Let $ f, g \in L^{ \infty } \left( \left[ -\pi, \pi \right] \right) $ be real valued almost everywhere such that $ \essinf f>0, \essinf g > 0 $ ; then  
    \begin{equation*}
        \norm{ G \left( T_{ n } \left( f \right), T_{ n } \left( g \right)^{ -1 } \right) T_{ n } \left( g \right) - G \left( T_{ n } \left( f \right), T_{ n } \left( g \right) \right) }_{ F } = o \left( \sqrt{ n } \right)
    \end{equation*}
\end{lem}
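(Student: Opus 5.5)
Following Lemma \ref{lemma_2} and Corollary \ref{cor_1}, the plan is to replace every Toeplitz matrix by its optimal circulant approximation $C_n(\cdot)$ from Definition \ref{def_approx_circ}. For circulants the identity is exact, so only the approximation errors need to be controlled. Set $A=T_n(f)$, $B=T_n(g)$, $\tilde A=C_n(f)$, $\tilde B=C_n(g)$. By the triangle inequality,
\begin{align*}
\norm{G(A,B^{-1})B-G(A,B)}_F &\leq \norm{G(A,B^{-1})B-G(\tilde A,\tilde B^{-1})\tilde B}_F \\
&+ \norm{G(\tilde A,\tilde B^{-1})\tilde B-G(\tilde A,\tilde B)}_F + \norm{G(\tilde A,\tilde B)-G(A,B)}_F .
\end{align*}
The last term is $o(\sqrt n)$ by the first statement of Lemma \ref{lemma_2}.

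The middle term is zero. All circulant matrices are simultaneously diagonalized by the Fourier matrix $F_n$. Write $\tilde A=F_n\,\mathrm{diag}(a_i)F_n^*$ and $\tilde B=F_n\,\mathrm{diag}(b_i)F_n^*$ with $a_i,b_i>0$; positivity holds because $\tilde A,\tilde B$ are HPD by Definition \ref{def_approx_circ}. Then every function in \eqref{def:geom_means} acts entrywise on the eigenvalues. This gives
\[
G(\tilde A,\tilde B^{-1})\tilde B=F_n\,\mathrm{diag}\big((a_i/b_i)^{1/2}b_i\big)F_n^* = F_n\,\mathrm{diag}\big((a_ib_i)^{1/2}\big)F_n^*=G(\tilde A,\tilde B).
\]

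For the first term we apply Lemma \ref{lemma_XY} with $X=G(A,B^{-1})$, $Y=B$, $\tilde X=G(\tilde A,\tilde B^{-1})$, $\tilde Y=\tilde B$. This bounds the term by
\[
\norm{G(A,B^{-1})}_{S,\infty}\norm{B-\tilde B}_F+\norm{\tilde B}_{S,\infty}\norm{G(A,B^{-1})-G(\tilde A,\tilde B^{-1})}_F .
\]
\begin{itemize}
\item Since $G$ is symmetric, $G(A,B^{-1})=G(B^{-1},A)$. Its spectral norm is therefore uniformly bounded by the second statement of Lemma \ref{schatten_geom}, with the roles of $f$ and $g$ swapped.
\item $\norm{\tilde B}_{S,\infty}\leq\norm{g}_\infty$ uniformly, because the Chan circulant's eigenvalues are Cesàro-type averages of the Fourier data of $g$ and stay in $[\essinf g,\esssup g]$.
\item $\norm{B-\tilde B}_F=o(\sqrt n)$ is the standard Chan estimate already used in the proof of Lemma \ref{lemma_2}.
\item $\norm{G(B^{-1},A)-G(\tilde B^{-1},\tilde A)}_F=o(\sqrt n)$ is exactly the second statement of Lemma \ref{lemma_2} with $f,g$ interchanged, again using symmetry of $G$.
\end{itemize}
Combining these gives the claim.

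The main obstacle is already absorbed in Lemma \ref{lemma_2}: the geometric mean must be Lipschitz in Frobenius norm with respect to perturbations of its arguments. This needs the spectra of all matrices involved to stay in a fixed compact subset of $(0,\infty)$, which is where $\essinf f,\essinf g>0$ and $f,g\in L^\infty$ enter. The one point to check carefully here is that the inverse-argument case in Lemma \ref{lemma_2} indeed covers $G(T_n(g)^{-1},T_n(f))$ against $G(C_n(g)^{-1},C_n(f))$, which holds by relabeling.
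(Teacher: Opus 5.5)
Your proof is correct, but it is organized differently from the paper's. The paper does not use Lemma \ref{lemma_2} here. It sets $X=A^{-1/2}BA^{-1/2}$, $Y=A^{-1/2}B^{-1}A^{-1/2}$, $Z=A^{1/2}BA^{-1/2}$ and factors $G(A,B^{-1})B-G(A,B)=A^{1/2}\left(Y^{1/2}Z-X^{1/2}\right)A^{1/2}$. It then notes that the circulant analogue of the bracket vanishes. Finally it re-derives Frobenius perturbation bounds for $X^{1/2}$, $Y^{1/2}$ and $Z$ from scratch, via Lipschitz continuity of $t\mapsto\sqrt{t}$, the identity $A^{-1}-\tilde A^{-1}=\tilde A^{-1}(\tilde A-A)A^{-1}$, and repeated use of Lemma \ref{lemma_XY}.

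You instead compare the full product $G(A,B^{-1})B$ with $G(\tilde A,\tilde B^{-1})\tilde B$. Using the symmetry $G(A,B^{-1})=G(B^{-1},A)$ and the relabeling $f\leftrightarrow g$, you reduce everything to the second statements of Lemmas \ref{schatten_geom} and \ref{lemma_2} plus one application of Lemma \ref{lemma_XY}.

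Your version is shorter. It makes clear that the lemma amounts to an exact identity on circulants combined with Frobenius-Lipschitz continuity of $G$ on HPD matrices with spectra in a fixed compact subset of $(0,\infty)$. The paper's version is self-contained, at the cost of essentially repeating the estimates of Lemma \ref{lemma_2}. Yours relies on the second statement of Lemma \ref{lemma_2}, which the paper only justifies by ``minimal variations''. That reliance is harmless. The argument for the first statement of Lemma \ref{lemma_2} only uses that both arguments have spectra in a fixed compact subset of $(0,\infty)$ and lie within $o(\sqrt{n})$ in Frobenius norm of their circulant counterparts. The inverses $A^{-1}$ and $\tilde A^{-1}$ satisfy both conditions, since $\|A^{-1}-\tilde A^{-1}\|_F\le\|A^{-1}\|\,\|\tilde A^{-1}\|\,\|A-\tilde A\|_F$.
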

\begin{proof}
    
    Let $X_n =T_{ n } \left( f \right)^{-1/2} T_n \left( g \right) T_n \left( f \right)^{-1/2}$, $Y_n = T_n \left( f \right)^{-1}X_n^{-1} T_n \left( f \right)^{-1}$ and $Z_n= T_n \left( f \right) X_n= T_n \left( f \right)^{1/2} T_n \left( g \right) T_n \left( f \right)^{-1/2}$ and let $ \tilde{ X }_n $, $ \tilde{ Y }_n $ and $ \tilde{ Z }_n $ be their corresponding circulant versions. \\
    Note that, by the definition of $X_n$, $T_n \left( g \right)= T_n \left( f \right)^{1/2} X_n T_n \left( f \right)^{1/2}$, so that $T_{ n } \left( g \right)^{-1}= T_{ n } \left( f \right)^{-1/2} X_n^{-1} T_n \left( f \right)^{-1/2}$. 
Since $\left\lbrace T_n \left( f \right) \right\rbrace_n$ and $ \left\lbrace T_n \left( g \right) \right\rbrace_n$ are uniformly bounded, so are $ \left\lbrace X_n \right\rbrace_n$, $\left\lbrace Y_n \right\rbrace_n$ and $\left\lbrace Z_n \right\rbrace_n$. \\
As in Lemma \ref{lemma_2}, let  $ A = T_{ n } \left( f \right) $, $ B = T_{ n } \left( g \right) $ and $ \tilde{ A } = C_{ n } \left( f \right) $, $ \tilde{ B } = C_{ n } \left( g \right) $.
Now, for notational convenience, we drop the index $ n $ for $ X_n $, $ Y_n $, $ Z_n $ and $ \tilde{ X }_n $, $ \tilde{ Y }_n $, $ \tilde{ Z }_n $ throughout the proof. \\    
Consider now the difference in which we are interested, i.e., $G(A, B^{-1} )B- G(A,B)$; we have
\begin{align*}
    G(A, B^{-1} )B- G(A,B)&=A^{1/2}\left( A^{-1/2}B^{-1}A^{-1/2}\right)^{1/2} A^{1/2} B - A^{1/2}\left( A^{-1/2} B A^{-1/2}\right)^{1/2}A^{1/2}\\
    &=A^{1/2}Y^{1/2}A^{1/2} B- A^{1/2} X^{1/2}A^{1/2}\\
    &=A^{1/2}\left( Y^{1/2} A^{1/2} B A^{-1/2} - X^{1/2} \right) A^{1/2}\\
    &= A^{1/2} \left( Y^{1/2} Z - X^{1/2} \right) A^{1/2}.
\end{align*}
From the previous equality, we immediately get
\begin{equation*}
    \norm{ G(A, B^{-1} )B- G(A,B) }_{ F } \leq \norm{ A^{1/2} }_{ S, \infty }^{2} \norm{ Y^{1/2}Z-X^{1/2} }_{ F }.
\end{equation*}
Since $\norm{A^{1/2}}_{S,\infty} $ is (uniformly) bounded, we only need to prove that
\begin{equation*}
    \norm{ Y^{1/2}Z-X^{1/2} }_F =o \left( \sqrt{n} \right)
\end{equation*}
as $n\to \infty$ to conclude.
Recalling the commutativity of circulant matrices, a straightforward computation yields $\tilde A^{1/2} \left( \tilde Y^{1/2} \tilde Z - \tilde X^{1/2} \right) \tilde A^{1/2} =0$, so that
\begin{align*} 
    \norm{ Y^{1/2}Z-X^{1/2} }_F &= \norm{Y^{1/2}Z-X^{1/2} - \tilde Y^{1/2} \tilde Z+ \tilde X^{1/2} }_F \\
    &\leq \norm{ \tilde X^{1/2} - X^{1/2} }_F + \norm{ \tilde Y^{1/2} \tilde Z - Y^{1/2} Z}_F.
\end{align*}
Let $E_1= \tilde X^{1/2} -  X^{1/2}$ and  $E_2=\tilde Y^{1/2} \tilde Z - Y^{1/2}  Z$ and estimate $\norm{E_1}_{F} $ and $\norm{E_2}_{F}$ to conclude. \\ 
We begin by providing a bound for $ \norm{ E_1 }_{ F } $. First of all, since $t \mapsto \sqrt{t}$ is Lipschitz continuous and noticing that, just as in Lemma \ref{lemma_2}, $X=A^{-1/2}BA^{-1/2}$ and $\tilde{X}=\tilde{A}^{-1/2}\tilde{B}\tilde{A}^{-1/2}$  have spectrum in an interval uniformly bounded away from $0$, we get $\norm{ \tilde X^{1/2}-X^{1/2} }_F \leq L \norm{ \tilde X - X }_F$. Now notice that
\begin{align*}
    \tilde X-X &= \tilde A^{-1/2} \tilde B \tilde A^{-1/2} - A^{-1/2} B A^{-1/2} \\
               &=\left( \tilde A^{-1/2} - A^{-1/2} \right) B A^{-1/2} + \tilde A^{-1/2} \left( \tilde B - B \right) A^{-1/2} + \tilde A^{-1/2} \tilde B \left( \tilde A^{-1/2} - A^{-1/2}\right),  
\end{align*}
so that
\begin{align*}
    \norm{ \tilde X^{1/2}  - X^{1/2} }_F & \leq L \norm{ \tilde X  - X }_F \\ 
    & \leq L \norm{ \tilde A^{-1/2}-A^{-1/2} }_F \left( \norm{ A^{-1/2} }_{S,\infty} \norm{ B }_{ S, \infty } + \norm{ \tilde A^{-1/2} }_{S,\infty} \norm{ \tilde B }_{S,\infty} \right) \\
    & + L \norm{ \tilde B -B }_F \norm{ A^{-1/2} }_{S,\infty} \norm{ \tilde A^{-1/2} }_{S,\infty}.
\end{align*}
Since all the terms in Schatten-$\infty$ norm are bounded by a constant, $\norm{ \tilde A^{-1/2}- A^{-1/2} }_F \leq L \norm{ \tilde A-A }_F = o\left(\sqrt{n} \right)$, and $\norm{ \tilde B - B }_F=o\left(\sqrt{n} \right)$, we conclude that $\norm{E_1}_F=o\left(\sqrt{n}\right)$. \\
A similar technique can be exploited for bounding $ \norm{ E_2 }_{ F } $; by Lemma \ref{lemma_XY} 
\begin{equation*}
    \norm{ E_2 }_F \leq \norm{ \tilde Y^{1/2} }_{S, \infty} \norm{ \tilde Z - Z }_F + \norm{ Z }_{S, \infty} \norm{ \tilde Y^{1/2} - Y^{1/2} }_F.
\end{equation*}
Notice that the Shatten-$\infty$ norms are bounded and, again by Lemma \ref{lemma_XY},
\begin{align*}
    \norm{ \tilde Z - Z }_F = \norm{ \tilde A \tilde X - AX }_F  \leq \norm{ \tilde A }_{S, \infty} \norm{ \tilde X - X }_F + \norm{ X }_{S, \infty} \norm{ \tilde A - A }_F = o \left( \sqrt{n} \right).
\end{align*}
Moreover, note that $Y=A^{-1}X^{-1}A^{-1}=A^{-1/2}B^{-1}A^{-1/2} \sim B^{-1}A^{-1}$ and $\tilde{Y} =\tilde{A}^{-1/2} \tilde{B}^{-1} \tilde{A}^{-1/2}=\tilde{B}^{-1} \tilde{A}$, so that applying the same reasoning as for $X$ and $\tilde{X}$ and using the Lipschitz continuity of $t \mapsto \sqrt{t}$, we get
\begin{align*}
    \norm{ \tilde Y^{1/2} - Y^{1/2} }_F & \leq L \norm{ \tilde Y - Y }_F = \norm{ \tilde A^{-1} \tilde X^{-1} \tilde A^{-1} -  A^{-1}  X^{-1}  A^{-1} }_F \\
    &= \norm{ \tilde A^{-1} \tilde X^{-1} \tilde A^{-1} - \tilde A^{-1} \tilde X^{-1}  A^{-1} + \tilde A^{-1} \tilde X^{-1}  A^{-1} -  A^{-1} X^{-1}  A^{-1} }_F \\
    & \leq \norm{ F_1 }_{ F } + \norm{ F_2 }_{ F },
\end{align*}
where $ F_1 = \tilde A^{-1} \tilde X^{-1} \tilde A^{-1} - \tilde A^{-1} \tilde X^{-1}  A^{-1} $ and $ F_2 = \tilde A^{-1} \tilde X^{-1}  A^{-1} -  A^{-1} X^{-1}  A^{-1} $. \\ 
Clearly,
\begin{align*}
    \norm{ F_1 }_{ F } &\leq \norm{ \tilde A^{-1} }_{S, \infty} \norm{\tilde X^{-1} \tilde A^{-1} - \tilde X^{-1} A^{-1} }_F \\
    &\leq \norm{ \tilde A^{-1} }_{S, \infty}  \norm{\tilde X^{-1} }_{S, \infty} \norm{\tilde A^{-1} - A^{-1} }_{F} = o \left(\sqrt{n} \right),  
\end{align*}
where we used that $\norm{ \tilde A^{-1} - A^{-1} }_F = \norm{ \tilde A^{-1}(A-\tilde A)A^{-1}}_F \leq C \norm{ \tilde A - A }_F=o\left(\sqrt{n}\right)$, and
\begin{align*}
    \norm{ F_2 }_{ F } &= \norm{ \left(  \tilde A^{-1} \tilde X^{-1}  - A^{-1}  X^{-1} \right)A^{-1} }_F \\
                       &\leq \norm{ \tilde A^{-1} }_{S, \infty} \norm{ \tilde X^{-1} - X^{-1} }_F \norm{A^{-1} }_{S, \infty} + \norm{ A^{-1} }_{S, \infty} \norm{ X^{-1} }_{S, \infty} \norm {\tilde A^{-1} - A^{-1} }_F = o \left(\sqrt{n} \right),
\end{align*}
by Lemma \ref{lemma_XY}. \\ 
Hence $\norm{E_2}_F = o \left(\sqrt{n}\right)$ and we conclude
\begin{equation*} 
    \norm{ G\left(A, B^{-1} \right)B- G \left(A,B \right) }_F = o \left( \sqrt{n} \right).
\end{equation*}
\end{proof}

Let $\mathcal{A}_n$ be as in \eqref{eq:A_Toeplitz}. For notational convenience, we let, for $j=1,\dots, k$, $ A_{ j } = T_{ n } \left( f_{ j, j } \right) $ and, for $j=1,\dots,k-1$, $ B_{ j } = T_{ n } \left( f_{ j +1, j } \right) $ and $ C_{ j } = T_{ n } \left( f_{ j, j + 1 } \right) $. \\ 
Consider now the block-diagonal matrix $ \mathcal{E}_n $ given by
\begin{equation*}
    \mathcal{E}_n = 
   \left[ 
   \begin{array}{ccc}
        E_{ 1 } & &  \\
        & \ddots & \\
        & & E_{ k }
   \end{array}
   \right], \text{ with }
   E_{ j } = 
   \begin{cases}
       I_{ n } \text{ for } j = 1, \\
       E_{ j - 1 } G \left( B_{ j -1 }^{ -1 }, C_{ j -1 } \right) \text{ for } j = 2, \dots, k,
   \end{cases} 
\end{equation*}
so that both $ \mathcal{ A }_n $ and $ \mathcal{ E }_n $ are square matrices of size $ kn $. 
\begin{thm}[] \label{thm_1} 
    With the previously introduced notation,
    \begin{equation*}
        \norm{ \mathcal{E}_n \mathcal{A}_n \mathcal{E}_n^{ - 1 } - \hat{\mathcal{A }}_n }_{ F } = o \left( \sqrt{n} \right)
    \end{equation*}
    as $ n\to \infty $, where
    \begin{equation*}
        \hat{\mathcal{A}}_n = 
        \left[ 
        \begin{array}{ccccc}
            A_{1} & G_1 &  &  &  \\
            G_1 & A_{2} & G_2 &  &  \\
             & G_2 & A_{3} &  &  \\
             &  &  &  & G_{k-1} \\
             &  &  & G_{k-1} & A_{k}
        \end{array}
        \right], \text{ with } G_{ j } = G \left( B_{ j }, C_{ j } \right) \text{ for } j=1,\dots,k-1.
    \end{equation*}
\end{thm}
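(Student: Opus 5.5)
Since $\mathcal{E}_n$ is block diagonal, the $(i,j)$ block of $\mathcal{E}_n\mathcal{A}_n\mathcal{E}_n^{-1}$ is $E_i(\mathcal{A}_n)_{ij}E_j^{-1}$. Because the Frobenius norm squared is additive over blocks and $k$ is fixed, it suffices to show each of the $3k-2$ nonzero blocks differs from the corresponding block of $\hat{\mathcal A}_n$ by $o(\sqrt n)$ in Frobenius norm. Write $H_j:=G(B_j^{-1},C_j)$, so $E_{j+1}=E_jH_j$. The blocks to compare are
\begin{equation*}
E_jA_jE_j^{-1}\ \text{vs}\ A_j,\qquad E_jC_jE_{j+1}^{-1}=E_jC_jH_j^{-1}E_j^{-1}\ \text{vs}\ G_j,\qquad E_{j+1}B_jE_j^{-1}=E_jH_jB_jE_j^{-1}\ \text{vs}\ G_j.
\end{equation*}

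The first step is to record uniform bounds. By Lemma~\ref{schatten_geom}, $\|H_j\|_{S,\infty}$ is bounded uniformly in $n$. The same holds for $H_j^{-1}=G(B_j,C_j^{-1})$, since the inverse of a geometric mean is the geometric mean of the inverses and $G$ is symmetric. Hence $E_j$ and $E_j^{-1}$ are products of boundedly many uniformly bounded factors and are themselves uniformly bounded. Consequently, multiplying an $o(\sqrt n)$ Frobenius error on the left or right by $E_j$ or $E_j^{-1}$ preserves $o(\sqrt n)$, by Lemma~\ref{lemma_XY}.

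The core step is to replace every Toeplitz object by its Chan circulant counterpart $\tilde A_j=C_n(f_{j,j})$, $\tilde B_j$, $\tilde C_j$, $\tilde H_j=G(\tilde B_j^{-1},\tilde C_j)$, $\tilde E_j$, $\tilde G_j$.
\begin{itemize}
\item Corollary~\ref{cor_1}, applied to the second (inverse) form, gives $\|E_j-\tilde E_j\|_F=o(\sqrt n)$.
\item The analogous statement for $E_j^{-1}$ follows from $E^{-1}-\tilde E^{-1}=E^{-1}(\tilde E-E)\tilde E^{-1}$ together with the uniform bounds.
\item Lemma~\ref{lemma_2} gives $\|G_j-\tilde G_j\|_F=o(\sqrt n)$ and $\|H_j-\tilde H_j\|_F=o(\sqrt n)$.
\item The Chan property gives $\|A_j-\tilde A_j\|_F=o(\sqrt n)$, and similarly for $B_j$ and $C_j$.
\end{itemize}
All circulants commute. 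For HPD circulants, $G(X,Y)=(XY)^{1/2}$, so $\tilde H_j\tilde B_j=(\tilde B_j^{-1}\tilde C_j)^{1/2}\tilde B_j=(\tilde B_j\tilde C_j)^{1/2}=\tilde G_j$, and likewise $\tilde C_j\tilde H_j^{-1}=\tilde G_j$. Thus the circulant versions of all three block comparisons are exact identities:
\begin{equation*}
\tilde E_j\tilde A_j\tilde E_j^{-1}=\tilde A_j,\qquad \tilde E_j\tilde C_j\tilde H_j^{-1}\tilde E_j^{-1}=\tilde G_j,\qquad \tilde E_j\tilde H_j\tilde B_j\tilde E_j^{-1}=\tilde G_j.
\end{equation*}

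Each true block then differs from its circulant counterpart by $o(\sqrt n)$, by a telescoping argument with Lemma~\ref{lemma_XY} over finitely many uniformly bounded factors, exactly as in Corollary~\ref{cor_1}. The same holds for $A_j$ versus $\tilde A_j$ and $G_j$ versus $\tilde G_j$. The triangle inequality then gives each block error as $o(\sqrt n)$, and summing the squares over the fixed number of blocks yields the claim.

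For the $B$-block one could instead invoke Lemma~\ref{lemma_1} directly: $G(C_j,B_j^{-1})B_j-G(C_j,B_j)$ is $o(\sqrt n)$. I prefer the uniform circulant route because it handles all blocks in the same way.

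The main obstacle I expect is the uniform invertibility bounds: showing that $E_j^{-1}$, and in particular $H_j^{-1}$, is uniformly bounded. I will check this through the identity $G(X,Y)^{-1}=G(X^{-1},Y^{-1})$ combined with Lemma~\ref{schatten_geom}. I also need to confirm that the circulant matrices $\tilde B_j,\tilde C_j$ have spectra bounded away from $0$ uniformly, so that the Lipschitz estimates used in Lemma~\ref{lemma_2} apply. This follows from the Chan spectra lying in $[\essinf f,\esssup f]$.
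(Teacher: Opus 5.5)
Your argument is correct. It uses the same ingredients as the paper: uniform spectral-norm bounds, Frobenius closeness of the Chan circulants, commutativity of the circulant algebra, and telescoping via Lemma~\ref{lemma_XY}. Where you differ is in how the off-diagonal blocks are decomposed.

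\textbf{The paper's route.} It splits the lower block as
$E_j\bigl(G(B_j^{-1},C_j)B_j-G_j\bigr)E_j^{-1}+\bigl(E_jG_jE_j^{-1}-G_j\bigr)$.
It controls the first piece with Lemma~\ref{lemma_1} and uses the circulant comparison only for the second. It then treats the upper block ``by the very same procedure''.

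\textbf{Your route.} You compare each entire block with its circulant counterpart. You note that this counterpart is exactly $\tilde G_j$, because $\tilde H_j\tilde B_j=\tilde G_j$ and $\tilde C_j\tilde H_j^{-1}=\tilde G_j$. This has three consequences.
\begin{enumerate}
\item Lemma~\ref{lemma_1} becomes unnecessary. The error $H_jB_j-G_j$ is controlled directly by Lemma~\ref{lemma_2}, the Chan estimate for $B_j$, and the exact circulant identity. In any case, the proof of Lemma~\ref{lemma_1} is essentially this circulant argument for a single factor.
\item The upper and lower blocks are handled identically.
\item You justify explicitly the uniform bound on $E_j^{-1}$, via $G(X,Y)^{-1}=G(X^{-1},Y^{-1})$ together with Lemma~\ref{schatten_geom}. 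The paper uses this bound without proof.
\end{enumerate}

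The paper's route has the advantage of isolating the estimate $\norm{G(A,B^{-1})B-G(A,B)}_F=o(\sqrt n)$ as a standalone, reusable statement. Yours is shorter and applies uniformly to all $3k-2$ blocks.
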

\begin{proof}
    We prove that each block of $\mathcal{E}_n \mathcal{A}_n \mathcal{E}_n^{ - 1 } - \hat{\mathcal{A } }_n$ has Frobenius norm of order $o \left( \sqrt{n} \right)$; since the total number of non-zero blocks is $3k-2$ independent of $ n $, the claim follows. \\
We begin by estimating the Frobenius norm of the generic $j$-th lower diagonal block, for $j=1,\dots,k-1$, explicitly given by $E_{j+1} B_j E_{j}^{-1} - G_{ j }$:
\begin{align*}
    \norm{E_{j+1} B_j E_{j}^{-1} - G_{ j }}_F &= \norm{E_{j+1} B_j E_{j}^{-1} - E_{j} G_j E_{j}^{-1} + E_{j} G_j E_{ j }^{-1} - G_{ j } }_F \\
    & \leq  \norm{E_{j+1} B_j E_{j}^{-1} - E_{j} G_j E_{ j }^{-1} }_F + \norm{E_{j} G_j E_{ j }^{-1} - G_{ j }}_F .
\end{align*}
We proceed by investigating each term independently: 
\begin{align*}
    \norm{E_{j+1} B_j E_{j}^{-1} - E_{j} G_j E_{ j}^{-1} }_F &= \norm{E_{j} \left( G \left( B_{j}^{-1}, C_{j} \right) B_{j} - G_{ j } \right) E_{j}^{-1}}_F \\
    &\leq \norm{E_{j}}_{S, \infty} \norm{ E_{j}^{-1}}_{S, \infty} \norm{G \left( B_{j}^{-1}, C_{j} \right) B_j - G_j}_F .
\end{align*}
By the sub-multiplicativity of Schatten $p$-norms and Lemma \ref{schatten_geom}, we have $\norm{E_{j}}_{S, \infty} \leq \prod_{i=1}^{j-1} \norm{G\left(B_{i}^{-1}, C_{i} \right)}_{S, \infty} \leq C$. \\
Moreover, we already proved in Lemma \ref{lemma_1} that $\norm{G \left( B_{j}^{-1}, C_{j} \right) B_j - G_j}_F = \norm{G \left( B_{j}^{-1}, C_{j} \right) B_j - G\left( B_j,C_j \right)}_F = o \left( \sqrt{n} \right)$; hence we conclude
\begin{equation*}
    \norm{E_{j+1} B_j E_{j}^{-1} - E_{j} G_j E_{j} }_F = o \left(\sqrt{n}\right).  
\end{equation*}
As for the second term, let $\tilde{E}_1 = I_n$ and $\tilde{E}_j = \tilde{E}_{j-1} G \left( \tilde{B}_{j-1}^{-1}, \tilde{C}_{j-1}  \right)$ for $j=2,\dots,k$; by exploiting the commutativity of the circulant algebra, we have $\tilde{E}_{j} G \left( \tilde{B}_j, \tilde{C}_j \right) \tilde{E}_{j}^{-1} = G \left( \tilde{B}_j, \tilde{C}_j \right)$; therefore, 
\begin{align*}
    \norm{E_{j} G_j E_{ j}^{-1} - G_{ j }}_F &= \norm{E_{j} G_j E_{ j }^{-1} - \tilde{E}_{j} G \left( \tilde{B}_j, \tilde{C}_j \right) \tilde{E}_{j}^{-1} + G \left( \tilde{B}_j, \tilde{C}_j \right) - G_j}_F \\
    & \leq \norm{E_{j} G_j E_{ j }^{-1} - \tilde{E}_{j} G \left( \tilde{B}_j, \tilde{C}_j \right) \tilde{E}_{j}^{-1}  }_F + \norm{ G \left( \tilde{B}_j, \tilde{C}_j \right) - G_j}_F.
\end{align*}
The term $\norm{ G \left( \tilde{B}_j, \tilde{C}_j \right) - G_j}_F = \norm{ G \left( \tilde{B}_j, \tilde{C}_j \right) - G \left( B_j, C_j \right)}_F = o\left( \sqrt{n} \right)$ by Lemma \ref{lemma_2}; for the first term we proceed as follows: $   $ 
\begin{align*}
    & \norm{E_{j} G_j E_{j}^{-1} - \tilde{E}_{j} G \left( \tilde{B}_j, \tilde{C}_j \right) \tilde{E}_{j}^{-1}  }_F\\
    &=  \Big\| E_{j} G_j E_{ j }^{-1} - \tilde{E}_{j} G_j E_{j}^{-1} + \tilde{E}_{j} G_j E_{j}^{-1} - \tilde{E}_{j} G \left( \tilde{B}_j, \tilde{C}_j \right) E_{j}^{-1} + \\
    &+\tilde{E}_{j} G \left( \tilde{B}_j, \tilde{C}_j \right) E_{j}^{-1} - \tilde{E}_{j} G \left( \tilde{B}_j, \tilde{C}_j \right) \tilde{E}_{j}^{-1}  \Big\|_F \\
    & \leq \norm{E_{j} G_j E_{j}^{-1} - \tilde{E}_{j} G_j E_{j}^{-1} }_F + \norm{ \tilde{E}_{j} G_j E_{j}^{-1} - \tilde{E}_{j} G \left( \tilde{B}_j, \tilde{C}_j \right) E_{j}^{-1}}_F \\
    & + \norm{\tilde{E}_{j} G \left( \tilde{B}_j, \tilde{C}_j \right) E_{j}^{-1} - \tilde{E}_{j} G \left( \tilde{B}_j, \tilde{C}_j \right) \tilde{E}_{j}^{-1}}_F \\
    & = \norm{\left(E_{j} - \tilde{E}_{j} \right)  G_j E_{j} }_F + \norm{\tilde{E}_{j} \left(G_j  - G \left( \tilde{B}_j, \tilde{C}_j \right)\right) E_{j}^{-1}}_F + \\
    & +\norm{\tilde{E}_{j} G \left( \tilde{B}_j, \tilde{C}_j \right) \left( E_{j}^{-1} - \tilde{E}_{j}^{-1} \right)}_F .
\end{align*}
The first term can be bounded by $\norm{G_j}_{S, \infty } \norm{E_{j}}_{S, \infty } \norm{E_{j} - \tilde{E}_{j}}_F $; by combining now Lemma \ref{schatten_geom} and Corollary \ref{cor_1}, we deduce $\norm{\left(E_{j} - \tilde{E}_{j} \right)  G_j E_{j} }_F = o \left(\sqrt{n} \right)$. \\
The same procedure can be applied with minimal variations to the terms $\norm{\tilde{E}_{j} \left(G_j  - G \left( \tilde{B}_j, \tilde{C}_j \right)\right) E_{j}^{-1}}_F$ and $\norm{\tilde{E}_{j} G \left( \tilde{B}_j, \tilde{C}_j \right) \left( E_{j} - \tilde{E}_{j} \right)}_F$, leading to the desired estimate, that is,
\begin{equation*}
    \norm{E_{j} G_j E_{ j }^{-1} - \tilde{E}_{j} G \left( \tilde{B}_j, \tilde{C}_j \right) \tilde{E}_{j}^{-1}  }_F = o \left( \sqrt{n} \right),
\end{equation*}
so that $ \norm{E_{j} G_j E_{ j }^{-1} - G_{ j }}_F = o \left( \sqrt{n} \right) $, allowing us to conclude $\norm{E_{j+1} B_j E_{j}^{-1} - G_{ j }}_F = o \left( \sqrt{n} \right)$. \\
For the upper diagonal block $E_{j} C_j E_{ j+1 }^{-1 } - G_{ j }$, $j=1,\dots,k-1$, the very same procedure leads to an analogous equality, that is, $\norm{E_{j} C_j E_{ j+1 }^{-1 } - G_{ j }}_F = o \left(\sqrt{n} \right)$. \\
Finally, we tackle the diagonal element: as above, we claim $ \norm{ E_{ j } A_j E_{j}^{-1} - A_{j} }_F = o \left(\sqrt{n} \right) $. \\
First of all, exploiting the commutativity of the circulant algebra, we rewrite the diagonal term as $ E_{ j } A_j E_{j}^{-1} - \tilde{E}_{j} \tilde{A}_{j} \tilde{E}_{j}^{-1} + \tilde{A}_{j} - A_{j} $, so that
\begin{equation*}
    \norm{ E_{ j } A_j E_{j}^{-1} - A_{j} }_F \leq \norm{ E_{ j } A_j E_{j}^{-1} - \tilde{E}_{j} \tilde{A}_{j} \tilde{E}_{j}^{-1} }_{ F } + \norm{ \tilde{A}_{j} - A_{j} }_{ F }.
\end{equation*}
Since $ \norm{ \tilde{A}_{j} - A_{j} }_{ F } = o \left( \sqrt{n} \right) $ by Definition \ref{def_approx_circ}, it is only left to prove $ \norm{ E_{ j } A_j E_{j}^{-1} - \tilde{E}_{j} \tilde{A}_{j} \tilde{E}_{j}^{-1} }_{ F } = o \left( \sqrt{n} \right) $. \\ 
Indeed,
\begin{align*}
    \norm{ E_{ j } A_j E_{j}^{-1} - A_{j} }_F & = \norm{ E_{ j } A_j E_{j}^{-1} - \tilde{E}_{ j} A_{ j } E_{ j }^{-1} +\tilde{E}_{ j} A_{ j } E_{ j }^{-1} - \tilde{E}_{j} \tilde{A}_{j} E_{j}^{-1} + \tilde{E}_{j} \tilde{A}_{j} E_{j}^{-1}- \tilde{E}_{j} \tilde{A}_{j} \tilde{E}_{j}^{-1} }_{ F } \\
     & \leq \norm{ \left( E_{j} - \tilde{E}_{j} \right) A_{j} E_{j}^{-1} }_{ F } + \norm{ \tilde{E}_{j} \left( A_{j} - \tilde{A}_{j} \right) E_{j}^{-1} }_{ F } + \norm{ \tilde{E}_{j} \tilde{A}_{j} \left( E_{j}^{-1} - \tilde{E}_{j}^{-1} \right)  }_{ F } \\
    & \leq C_{1} \norm{ E_{j} - \tilde{E}_{j} }_{ F } + C_{2} \norm{ A_{j} - \tilde{A}_{j} }_{ F } + C_{3} \norm{ E_{j}^{-1} - \tilde{E}_{j}^{-1} }_{ F },
\end{align*}
with $ C_{1} = \norm{ A_{j} E_{j}^{-1} }_{ S, \infty} $, $C_{2} = \norm{ \tilde{E}_{j} }_{ S, \infty} \norm{ E_{j}^{-1} }_{ S, \infty}$ and $ C_{3} = \norm{ \tilde{E}_{j} \tilde{A}_{j} }_{ S, \infty  } $, where we have already determined that all these quantities are bounded independently of $ n $. \\
By  Corollary \ref{cor_1}, we know that $ \norm{ E_{j} - \tilde{E}_{j} }_{ F } = o \left( \sqrt{n} \right) $ and, by Definition \ref{def_approx_circ}, $\norm{ A_{j} - \tilde{A}_{j} }_{ F } = o \left( \sqrt{n} \right) $. \\
Moreover, $ \norm{ E_{j}^{-1} - \tilde{E}_{j}^{-1} }_{ F } = \norm{ \tilde{E}_{j}^{-1} \left( E_{j} - \tilde{E}_{j} \right) E_{j}^{-1} }_{ F } \leq C_{4} \norm{ E_{j} - \tilde{E}_{j} }_{ F }$, with $ C_{4} = \norm{ E_{j}^{-1} }_{ S, \infty } \norm{ \tilde{E}_{j}^{-1} }_{ S, \infty } $, so that $ \norm{ E_{j} - \tilde{E}_{j} }_{ F } = o \left( \sqrt{n} \right) $. \\
Hence, we infer $ \norm{ E_{ j } A_j E_{j}^{-1} - A_{j} }_F = o \left(\sqrt{n} \right) $, concluding the proof. 
\end{proof}
We are now ready to prove our main result.
\begin{thm}\label{thm_final}
    Let $\mathcal{A}_n$ be as in Equation \eqref{eq:A_Toeplitz}. Assume that $f_{i,j} \in L^{\infty}([-\pi,\pi])$ and real-valued almost everywhere for every $i,j$. Moreover, assume that $\essinf f_{i, i+1}, \essinf f_{i+1, i} > 0$ for every $i=1,\dots,k-1$. Then
    \begin{equation*}
        \{\mathcal{A}_n\}_n \sim_{\lambda} F(\theta),
    \end{equation*}
    with
    \begin{equation*}
        F(\theta)= \begin{bmatrix}
f_{1,1} & f_{1,2} &  &  \\
f_{2,1} & f_{2,2} & \ddots &  \\
 & \ddots & \ddots & f_{k-1 , k} \\
 &  & f_{k, k-1} & f_{k, k}
\end{bmatrix}.
    \end{equation*}
    Equivalently,
    \begin{equation*}
        \{\mathcal{A}_n\}_n \sim_{\lambda} \tilde F(\theta),
    \end{equation*}
    with
    \begin{equation*}
        \tilde F(\theta) = \begin{bmatrix}
        f_{1,1} & f_{1,2}^{1/2}f_{2,1}^{1/2} &  &  \\
f_{1,2}^{1/2}f_{2,1}^{1/2} & f_{2,2} & \ddots &  \\
 & \ddots & \ddots & f_{k-1 , k}^{1/2}f_{k, k-1}^{1/2} \\
 &  & f_{k , k-1}^{1/2}f_{k-1 , k}^{1/2} & f_{k, k}
\end{bmatrix}.
    \end{equation*}
\end{thm}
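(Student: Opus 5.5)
Since similarity preserves eigenvalues, $\mathcal{A}_n$ and $\mathcal{E}_n\mathcal{A}_n\mathcal{E}_n^{-1}$ have the same spectrum. The plan is to write
\[
\mathcal{E}_n\mathcal{A}_n\mathcal{E}_n^{-1}=\hat{\mathcal{A}}_n+\mathcal{Y}_n,\qquad \mathcal{Y}_n:=\mathcal{E}_n\mathcal{A}_n\mathcal{E}_n^{-1}-\hat{\mathcal{A}}_n,
\]
and apply the perturbation principle \ref{GLT2}, in its block version \cite[Theorem 1]{BS20}. Here $\hat{\mathcal{A}}_n$ is Hermitian, because each $A_j=T_n(f_{j,j})$ is Hermitian ($f_{j,j}$ real-valued) and each $G_j=G(B_j,C_j)$ is HPD. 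By Theorem \ref{thm_1}, $\|\mathcal{Y}_n\|_F=o(\sqrt n)=o(\sqrt{kn})$. The hypothesis of \ref{GLT2} is therefore met, once we identify the GLT symbol of $\{\mathcal{E}_n\mathcal{A}_n\mathcal{E}_n^{-1}\}_n$, or equivalently of $\{\hat{\mathcal{A}}_n\}_n$ (the two differ by a zero-distributed sequence, since $\|\mathcal{Y}_n\|_F/\sqrt{kn}\to0$).

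\textbf{Symbol of $\hat{\mathcal{A}}_n$.} We work in the block GLT setting (\cite{GLT-block1D}), where a $k\times k$ block matrix with GLT blocks is, after the standard permutation, a block GLT sequence whose symbol is the matrix of the block symbols. By \ref{GLT3}, $\{A_j\}_n\sim_{\rm GLT} f_{j,j}$. By the geometric mean theorem of \cite{IKLS25} recalled in Section \ref{ssec:geom_means}, $\{G_j\}_n\sim_{\rm GLT}(f_{j,j+1}f_{j+1,j})^{1/2}$; this applies because $B_j,C_j$ are HPD, as $\essinf f_{j+1,j},\essinf f_{j,j+1}>0$. Hence $\{\hat{\mathcal{A}}_n\}_n\sim_{\rm GLT}\tilde F(\theta)$, and \ref{GLT2} (block form) gives $\{\mathcal{A}_n\}_n\sim_\lambda \tilde F$. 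The permutation $\Pi$ that reorders the block structure to a block-Toeplitz structure is a similarity, so it does not affect eigenvalues.

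\textbf{Equivalence of $F$ and $\tilde F$.} For a.e. $\theta$, $F(\theta)$ is a real tridiagonal matrix with positive off-diagonal products $f_{j,j+1}f_{j+1,j}$. It is therefore diagonally similar to the symmetric matrix $\tilde F(\theta)$ via $D(\theta)=\diag(d_1,\dots,d_k)$, with $d_1=1$ and $d_{j+1}=d_j\,(f_{j+1,j}/f_{j,j+1})^{1/2}$. This is exactly the scalar analogue of $\mathcal{E}_n$, since the symbol of $G(B_j^{-1},C_j)$ is $(f_{j,j+1}/f_{j+1,j})^{1/2}$. Consequently $\lambda_j(F(\theta))=\lambda_j(\tilde F(\theta))$ a.e., and the right-hand sides of \eqref{distribution:sv-eig} coincide.

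\textbf{Main obstacle.} All the hard estimates are already contained in Theorem \ref{thm_1}. The delicate point left is the formal passage to the block GLT framework: checking that the scalar-block normalization $\|\mathcal{Y}_n\|_F/\sqrt{kn}\to0$ matches the hypothesis of \cite[Theorem 1]{BS20} for $k\times k$ matrix-valued symbols, and that assembling the GLT blocks yields a block GLT sequence with symbol $\tilde F$. I would handle this by citing the block GLT algebra properties in \cite{GLT-block1D} for the permuted matrix $\Pi\hat{\mathcal{A}}_n\Pi^T$.
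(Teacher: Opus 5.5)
Your proposal is correct and follows the paper's proof essentially step by step: the similarity by $\mathcal{E}_n$ (and by the permutation $\Pi_n$), the Hermitian block GLT sequence $\Pi_n\hat{\mathcal{A}}_n\Pi_n^T$ with symbol $\tilde F$, the $o(\sqrt{n})$ Frobenius estimate from Theorem \ref{thm_1}, \cite[Theorem 1]{BS20}, and finally the diagonal similarity between $F$ and $\tilde F$. You also usefully make explicit the use of \cite{IKLS25} for the symbol of $G_j$. One cosmetic slip: with your choice $d_{j+1}=d_j(f_{j+1,j}/f_{j,j+1})^{1/2}$ the symmetric matrix is $D^{-1}FD$, so $D$ is the scalar analogue of $\mathcal{E}_n^{-1}$ rather than of $\mathcal{E}_n$; this does not affect the argument.
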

\begin{proof}
    With the notation introduced in Theorem \ref{thm_1}, we have $ \left\lbrace \Pi_{ n } \hat{ \mathcal{ A } }_{n} \Pi_{ n }^{ T } \right\rbrace_n \sim_{\text{GLT}} \tilde{F} $, where $ \Pi_{ n } $ is a permutation matrix that depends only on $ n $ and $ k $ (see \cite[Section 2]{GMS18}).
    Since $ \Pi_n \hat{ A }_{n} \Pi_n^T $ is Hermitian for every $n$, we deduce $ \left\lbrace \Pi_{ n }\hat{ A }_{n} \Pi_{ n }^{T} \right\rbrace_n \sim_{\lambda} \tilde{F} $. \\
    Consider now
        \begin{equation*}
            \Pi_{n} \mathcal{ E }_n \mathcal{ A }_n \mathcal{ E }_{n}^{-1} \Pi_n^{T} = \Pi_{ n } \hat{ \mathcal{ A } }_{n} \Pi_{ n }^{ T } - \Pi_{ n } \left( \mathcal{ E }_n \mathcal{ A }_n \mathcal{ E }_{n}^{-1} - \hat{ \mathcal{ A }}_n \right) \Pi_{n}^{T}.
        \end{equation*}
        As a consequence of Theorem \ref{thm_1},
        \begin{equation*}
            \norm{ \Pi_{ n } \left( \mathcal{ E }_n \mathcal{ A }_n \mathcal{ E }_{n}^{-1} - \hat{ \mathcal{ A }}_n \right) \Pi_{n}^{T} }_{ F } = o \left( \sqrt{n} \right).
        \end{equation*}
        Hence, by \cite[Theorem 1]{BS20}, we deduce $ \left\lbrace \Pi_{n} \mathcal{ E }_n \mathcal{ A }_n \mathcal{ E }_{n}^{-1} \Pi_n^{T} \right\rbrace_n \sim_{\lambda } \tilde{F}$.
         Since the matrices $ \mathcal{ A }_n $ and $ \Pi_{n} \mathcal{ E }_n \mathcal{ A }_n \mathcal{ E }_{n}^{-1} \Pi_n^{T} $ are similar, their spectra coincide: therefore, $ \left\lbrace \mathcal{ A }_{n} \right\rbrace_n \sim_{\lambda} \tilde{ F } $. \\
    Finally, since $ F \left( \theta \right) $ is similar to $ \tilde{ F } \left( \theta \right) $, the conclusion follows.\\
\end{proof}

\section{Numerical tests}\label{sec:test}

{In this section, we perform numerical tests confirming the validity of the theory developed and we exploit it to propose a preconditioning strategy for the problem at hand. Therefore, we divide this section in two parts: the first one establishing the goodness of the spectral analysis and the second giving a proposal and numerical analysis of a preconditioner.}

\subsection{Theory validation through spectral analysis}\label{ssec:test_distr}

In this subsection, we numerically confirm the results in Theorem \ref{thm_final} in a number of situations. We divide our numerical tests into different groups based on the number $k \geq 2$ and on the properties of the generating functions of each block.

\subsubsection{Case $k=2$}

We consider the case where $k=2$ and make the following choices for $f_{i,j}$, $i,j=1,2$: $f_{1,1}(\theta)= 3-4\cos(\theta), f_{1,2}(\theta) = 4-\cos(\theta)-2\cos(2\theta), f_{2,1}(\theta)= \theta^2+1, f_{2,2}(\theta)=\theta-5 $. Note that in this case, we have satisfied the assumption $\essinf f_{1,2},\essinf f_{2,1} > 0$. In Figure \ref{Caso_1_facile} we empirically show the validity of Theorem \ref{thm_final}. Notice that here we only consider the real part of the eigenvalues of $\mathcal{A}_n$, since the complex part is numerically negligible, being of order $10^{-14}$.
\begin{figure}[htpb]
\centering
    \includegraphics[width=60mm]{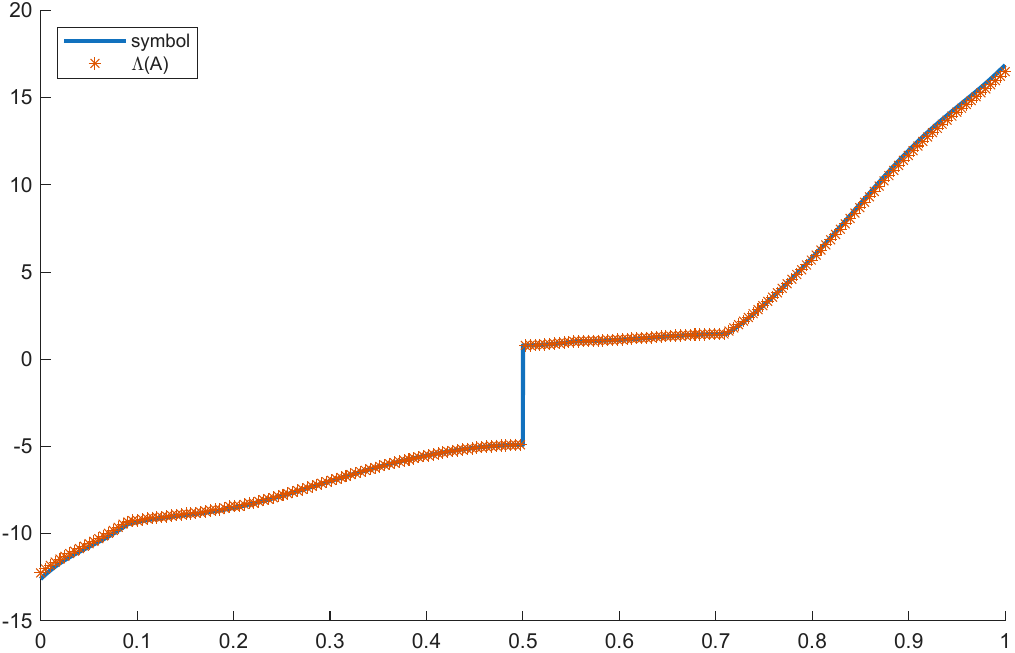}%
   \qquad \qquad
    \includegraphics[width=70mm]{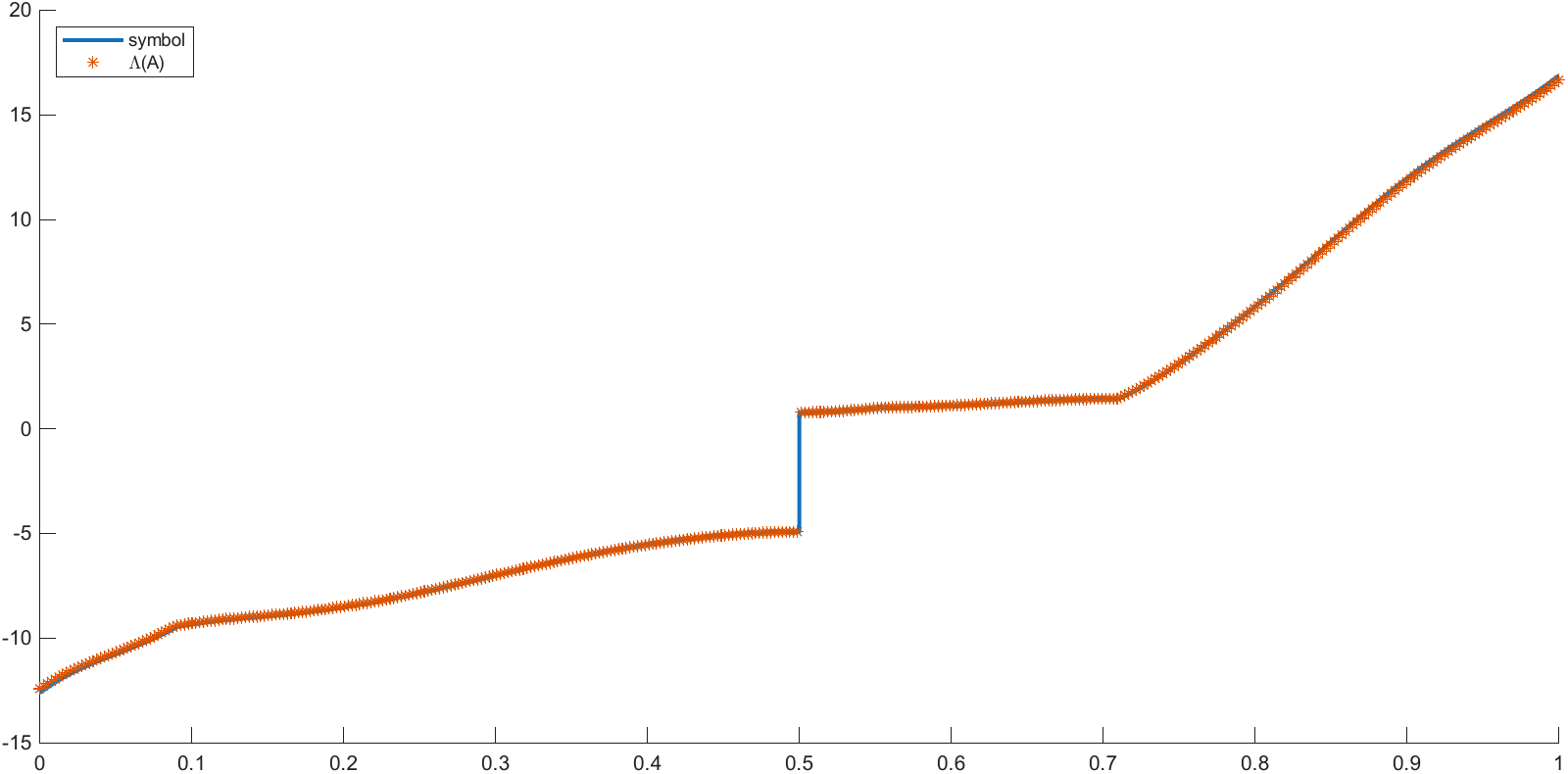}
    \caption{Comparison between the real part of the spectrum of $\mathcal{A}_n$ and a uniform sampling of $F(\theta)$ for $n=100,200$}
    \label{Caso_1_facile}
\end{figure}
\subsection{Case $k=2$ {(stretching the hypothesis)}}\label{ssec_k2_limit}

In this subsection, we let $k=2$ and analyze the case in which both $\essinf f_{1,2} $ and $\essinf f_{2,1}$ are equal to $0$, therefore breaking the hypothesis of our main theorem. This is motivated by the fact that, in order to study the distribution of geometric means of GLT sequences, no requirement is necessary, apart from the positive definiteness of the matrices involved (\cite{IKLS25}). The numerical experiments suggest that the technical hypothesis $\essinf f_{i,j} > 0$ might be abandoned (see Conjecture \ref{conj_final}).\\
We consider $f_{1,1}(\theta)= -\theta^4, f_{1,2}(\theta) = 2-2\cos(\theta), f_{2,1}(\theta)= \theta^2, f_{2,2}(\theta)=1 $. Note that, in this case, it still holds $\mathcal{L}\left(\{\theta \in [-\pi,\pi] \, : \,f_{i,j}(\theta)=0\}\right)=0$ for every $i,j=1,2$. Figure \ref{Caso_1_limite} shows that the same conclusion as in Theorem \ref{thm_final} seems to hold even under weaker assumptions on the generating functions of each block. As above, the complex part of the eigenvalues of $\mathcal{A}_n$ turns out to be numerically negligible, being again of order $10^{-14}$.

\begin{figure}[h]
\centering
    \includegraphics[width=70mm]{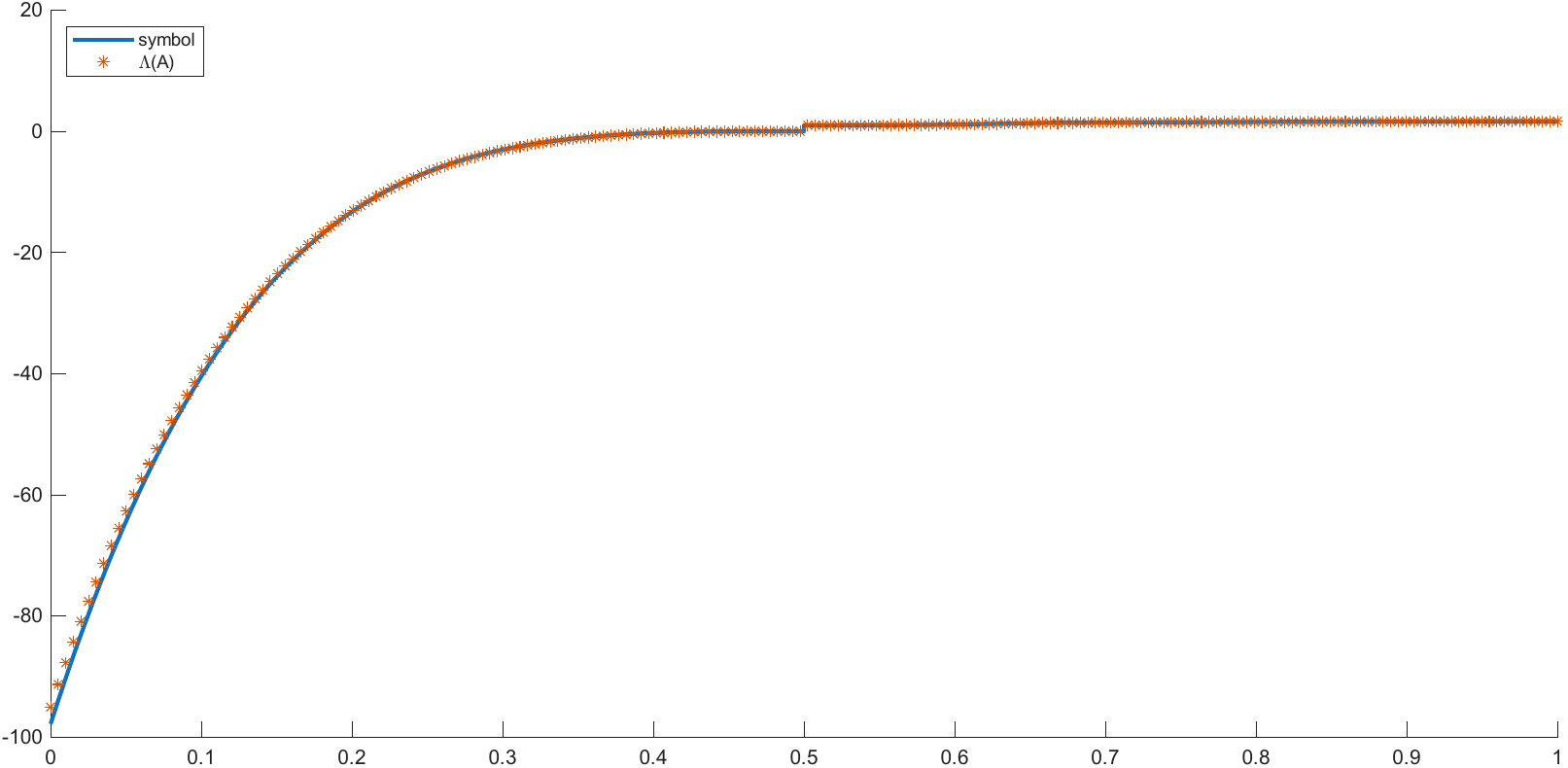}%
    \qquad \qquad
    \includegraphics[width=70mm]{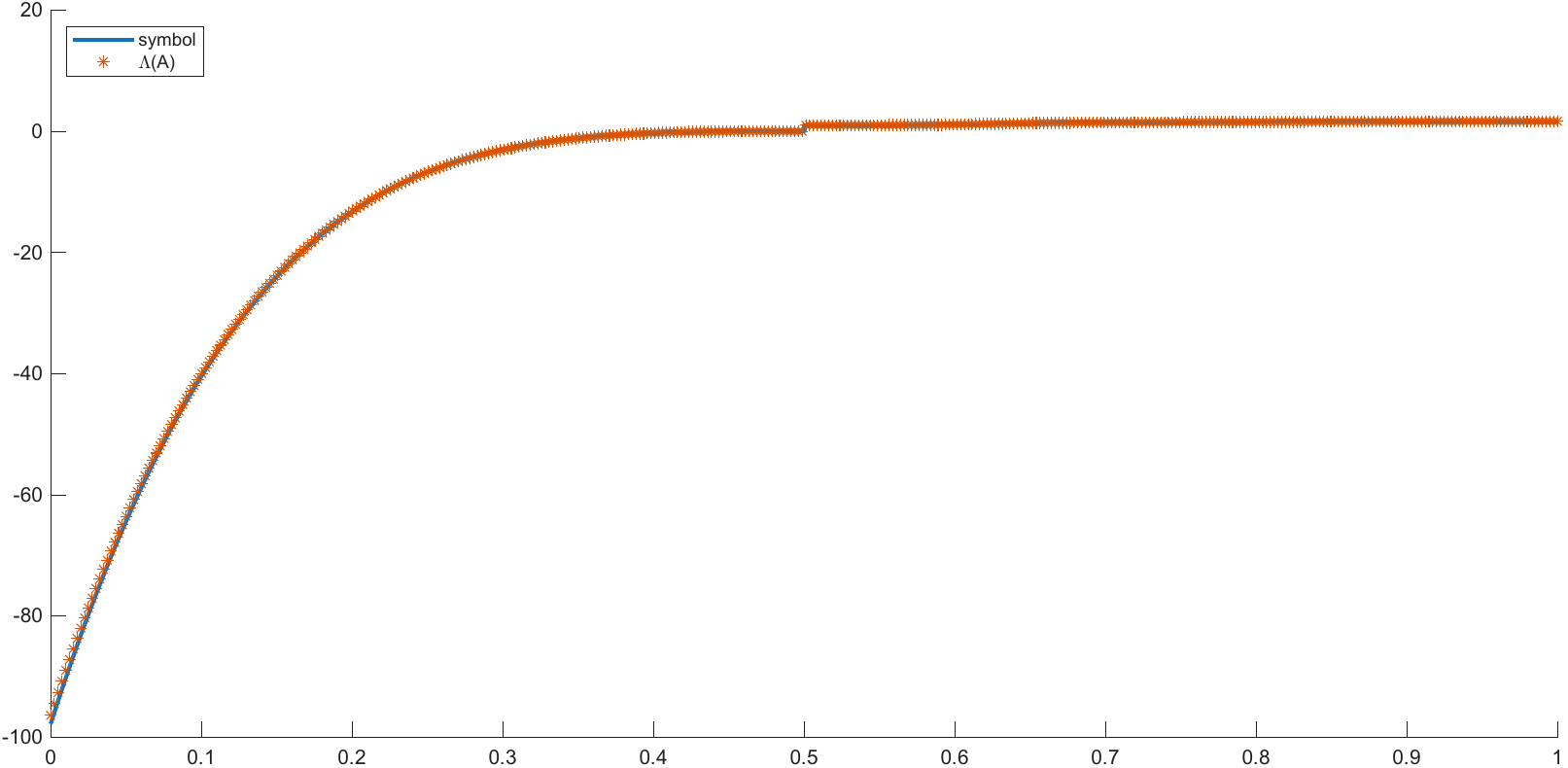}
    \caption{Comparison between the real part of the spectrum of $\mathcal{A}_n$ and a uniform sampling of $F(\theta)$ for $n=100,200$}
    \label{Caso_1_limite}
\end{figure}

\subsubsection{Case $k=3$ (the limit case)}

In this subsection, in the case $k=3$ and with the same theoretical setting as in Subsection \ref{ssec_k2_limit}, we analyze the spectral behavior of $\{\mathcal{A}_n\}_n$, with the following choices for $f_{i,j}$: $f_{1,1}(\theta)= \theta^2, f_{1,2}(\theta) = 2-2\cos(\theta), f_{2,1}(\theta)= \theta^2, f_{2,2}(\theta)=1, f_{2,3}(\theta)=\theta+4, f_{3,2}(\theta)=|\theta|, f_{3,3}(\theta)=\sqrt{|\theta|} $. Figure \ref{Caso_2_limite} confirms the agreement between the symbol function and the spectrum of the sequence. As above, the error in considering only the real part of the eigenvalues is numerically negligible, being of order $10^{-14}$.

\begin{figure}[H]
\centering
    \includegraphics[width=70mm]{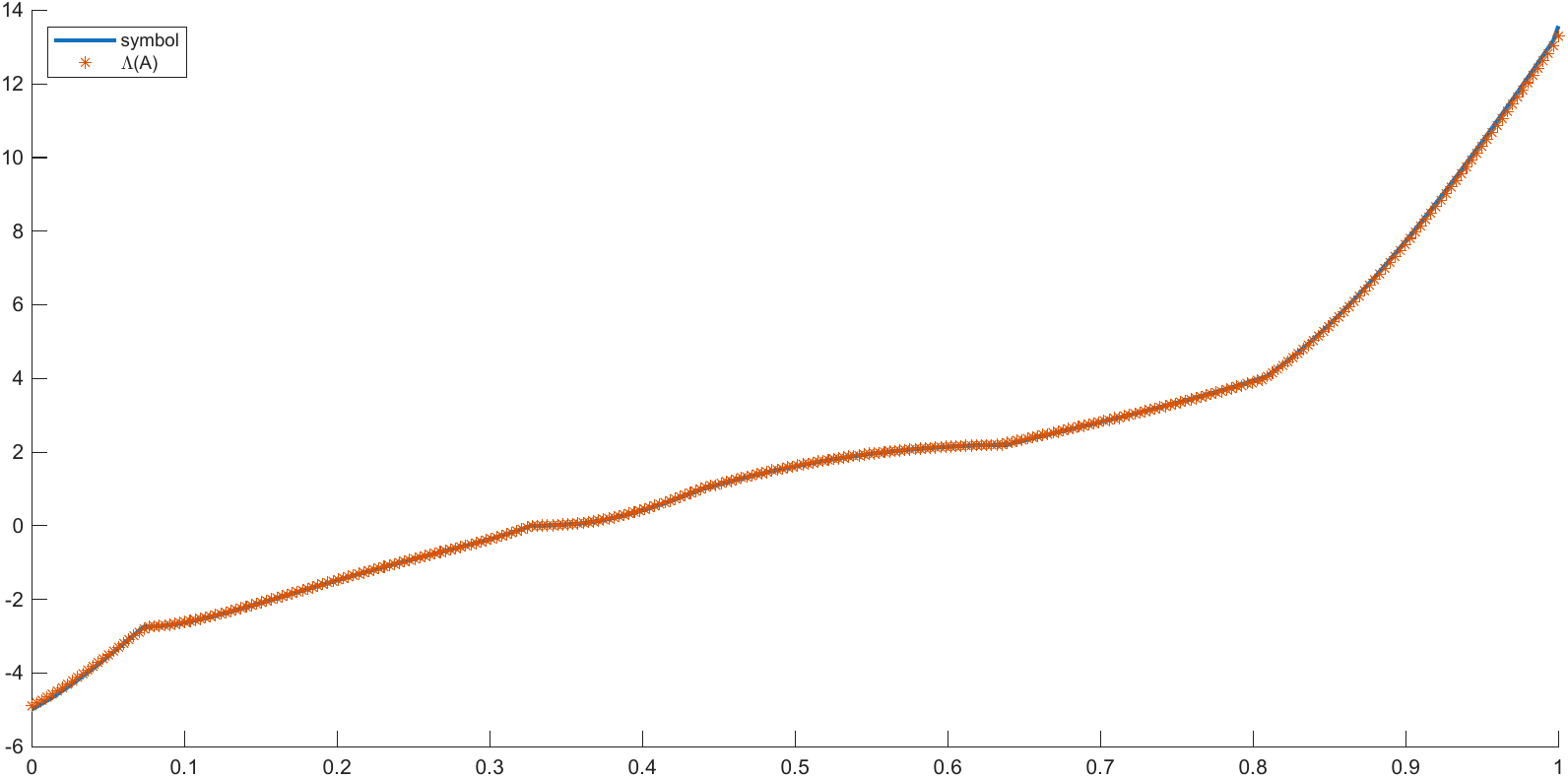}%
    \qquad \qquad 
    \includegraphics[width=70mm]{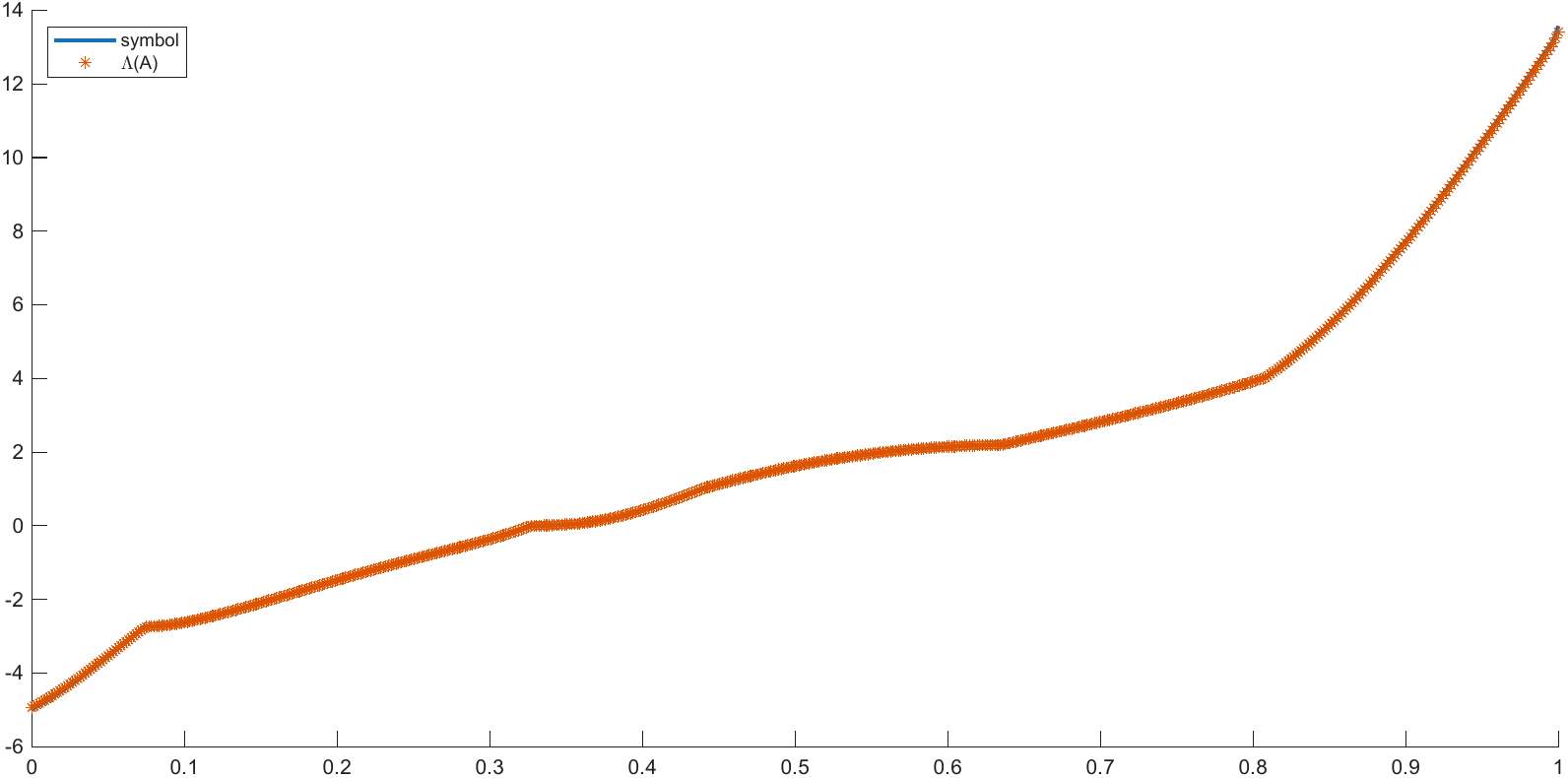}
    \caption{Comparison between the real part of the spectrum of $\mathcal{A}_n$ and a uniform sampling of $F(\theta)$ for $n=100,200$}
    \label{Caso_2_limite}
\end{figure}

\subsubsection{Case $k=3$ without technical assumptions}

In this subsection, we present the final example of the paper, where we consider the case $k=3$ and we abandon the last (technical) assumption $\mathcal{L}\left( \{\theta \in [-\pi,\pi] \, : \, f_{i,j}(\theta) =0\} \right)=0$. In particular, we set $f_{1,1}(\theta)= -\theta^4, f_{1,2}(\theta) = 2-2\cos(\theta), f_{2,1}(\theta)= \theta^2, f_{2,2}(\theta)=1, f_{2,3}(\theta)=\theta \mathds{1}_{[0,\pi]}(\theta), f_{3,2}(\theta)=|\theta|, f_{3,3}(\theta)= \sqrt{|\theta|} $. Based on the numerical evidence contained in this subsection and driven by the analysis in \cite{IKLS25}, we think that it is possible to remove any technical hypothesis on the generating functions, simply asking for positive definiteness of all the blocks (see Conjecture \ref{conj_final} for more details).
However, it is important to underline that, differently from the cases analyzed in the previous subsections, the eigenvalues of $\mathcal{A}_{n}$ possess a complex part substantially greater than before. Nevertheless, when considering the real part of the eigenvalues, we still see a very good agreement between the candidate symbol and the spectrum of $\mathcal{A}_n$, as shown in Figure \ref{Caso_crazy}. Moreover, we also notice that the imaginary part of the eigenvalues of $\mathcal{A}_n$ decreases as $n$ increases as shown in Figure \ref{error}, with order approximately of $O(1/n)$ as can be inferred by Table \ref{table:error}.

\begin{figure}[H]
\centering
    \includegraphics[width=70mm]{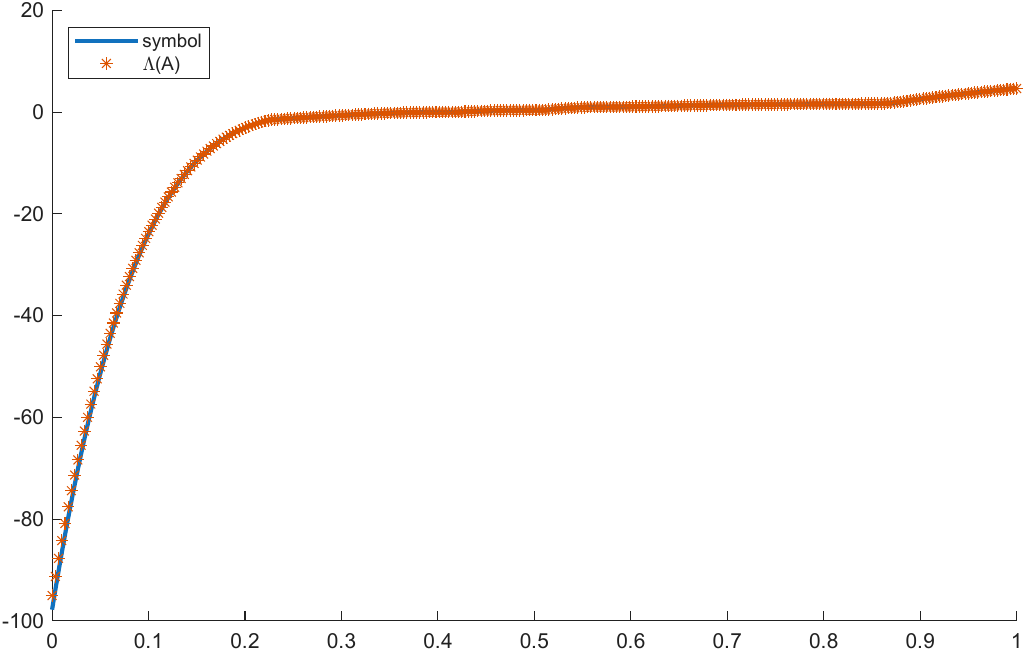}%
    \qquad \qquad 
    \includegraphics[width=70mm]
    {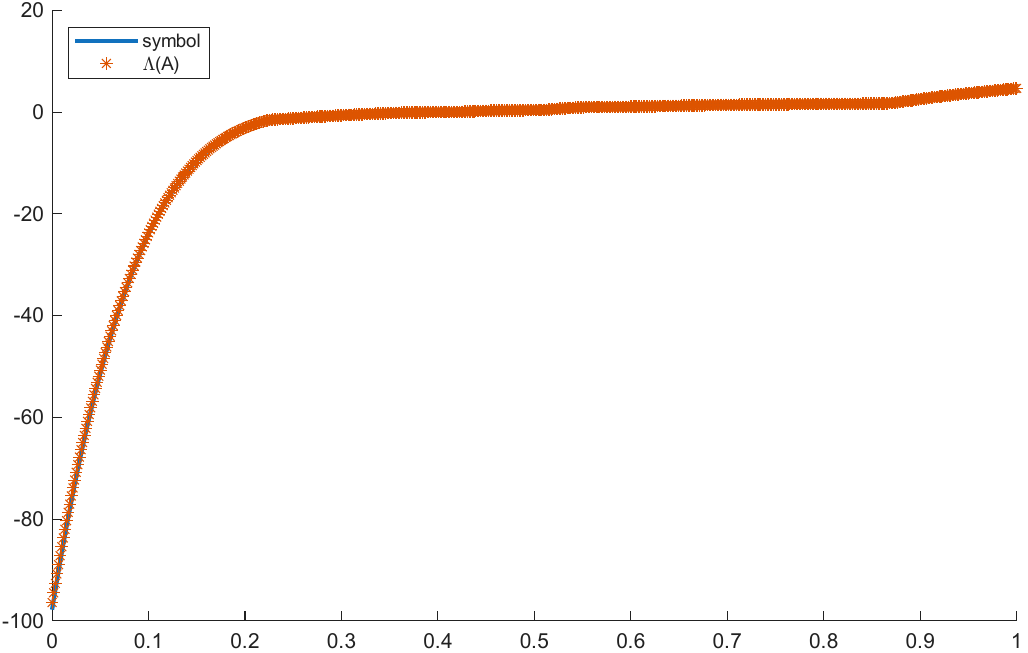}
    \caption{Comparison between the real part of the spectrum of $\mathcal{A}_n$ and a uniform sampling of $F(\theta)$ for $n=100,200$}
    \label{Caso_crazy}
\end{figure}

\begin{figure}[H]
\centering
    \includegraphics[width=140mm]
    {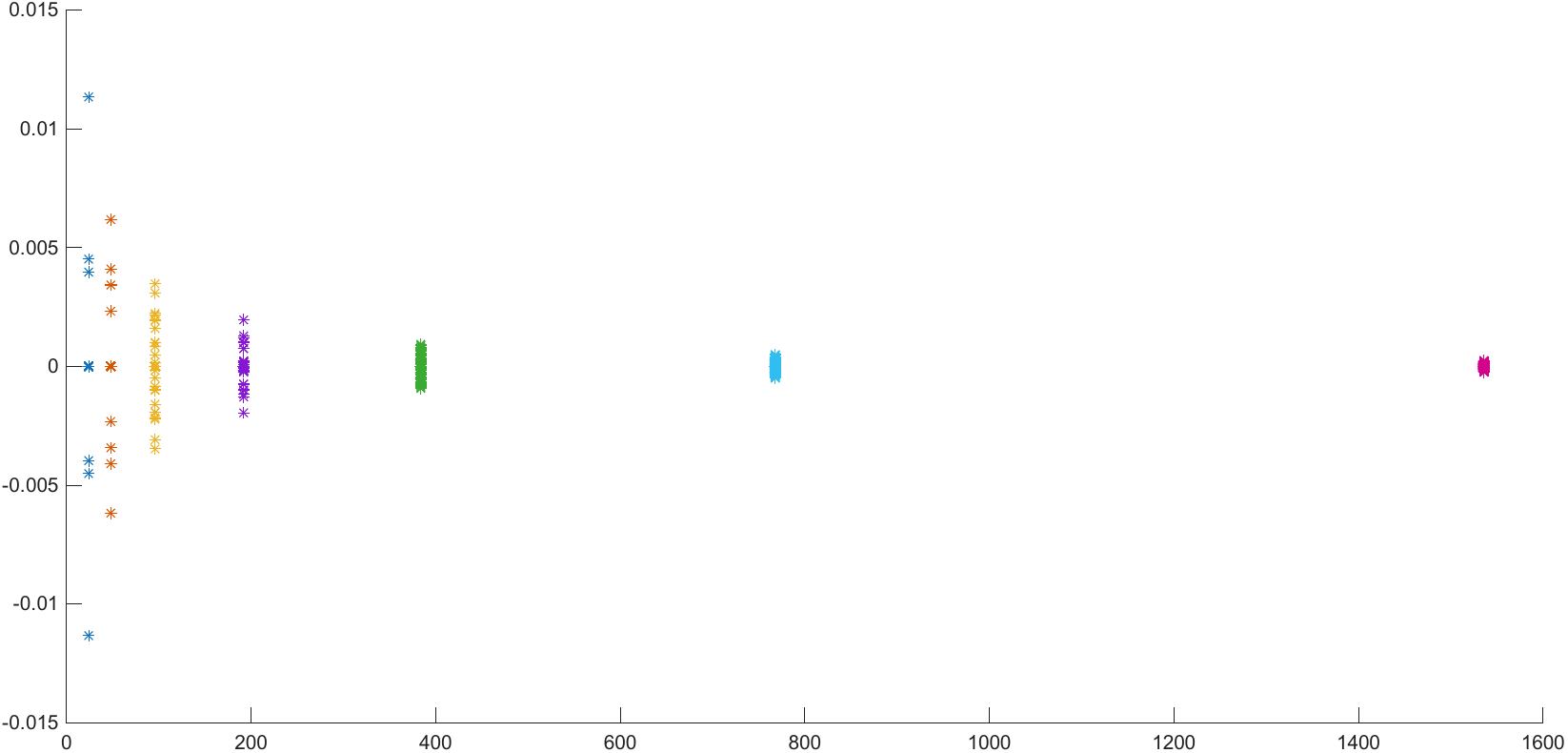}
    \caption{In each column, depending on $n$, it is represented the value of the imaginary part of all the eigenvalues of $\mathcal{A}_n$}
    \label{error}
\end{figure}

\begin{table}[h!]
\centering
\begin{tabular}{|c|c|}
\hline
$n$ & $\max_{\lambda} \left|\Im(\lambda(\mathcal{A}_n))\right|$ \\
\hline
24 & 0.0113  \\
\hline
48 & 0.0062  \\
\hline
96 & 0.0035 \\
\hline
192 & 0.0020 \\
\hline
384 &  9.3026e-04\\
\hline
768 & 4.9599e-04\\
\hline
1536 & 2.4318e-04 \\
\hline
\end{tabular}
\caption{Maximum over $\lambda$ of the absolute value of the imaginary part of $\lambda(\mathcal{A}_n)$, varying $n$}
\label{table:error}
\end{table}

\subsection{Preconditioning strategy}

Consider the linear system 
\begin{equation}\label{eq_system}
    \mathcal{A}_n x_n= b_n,
\end{equation}
with $b_n \in \mathbb{C}^{d_n}$ and $\mathcal{A}_n \in \mathbb{C}^{d_n \times d_n}$ as in \eqref{eq:A_Toeplitz}. Studying under which conditions on the generating functions $f_{i,j}$ the matrix $\mathcal{A}_n$ is invertible is a problem of its own and will be the subject of future research. Assuming this is the case and inspired by the circulant approximation of our analysis, we propose the following preconditioner to solve Equation \eqref{eq_system}:
\begin{equation*} \label{eq:preconditioner}
    \mathcal{C}_n = 
    \begin{bmatrix}
C_n(f_{1,1}) & C_{n}(f_{1,2}) &  &  \\
C_n(f_{2,1}) & C_n(f_{2,2}) & \ddots &  \\
 & \ddots & \ddots & C_n(f_{k-1 , k}) \\
 &  & C_n(f_{k, k-1}) & C_n(f_{k, k})
\end{bmatrix},
\end{equation*}
where again, {$C_{n}(f_{i,j}) \in \mathbb{C}^{n \times n}$, with $n = d_n / k$,} is as in Definition \ref{def_approx_circ}. Now, denote by
\begin{equation*}
    F_n=\frac{1}{\sqrt{n}} \left[\mathrm{e}^{-\i \frac{2\pi (j-1)(k-1)}{n}}\right]_{j,k=1}^{n}
\end{equation*}
the usual discrete Fourier transform, so that $C_n(f)=F_n \diag (\sqrt{n} F_n^{*} \bfc_f) F_n^{*}$, with
$$
\bfc_f = (c_1(f),\dots,c_n(f)), 
$$
where, for $k=1,\dots,n$,
\begin{equation*}
    c_k(f)= \frac{(n-k+1)\tilde{f}_{k-1}+(k-1) \tilde{f}_{-n+k-1}}{n}.
\end{equation*}

A fast procedure for solving a system of the type $ \mathcal{C}_{n} y = z $ can be easily obtained. 
Let $D_{ s,t } =  \diag(\tilde{\bfc}^{(s,t)}) \in \mathbb{C}^{n \times n }$, where $ \tilde{\bfc}^{(s,t)} = \sqrt{n} F_n^{*} \bfc_{f_{s,t}}$ and define
\begin{equation*}
   \mathcal{D}_{n} = 
    \begin{bmatrix}
    D_{1, 1} & D_{1, 2} &  &  \\
    D_{2, 1} & D_{2, 2} & \ddots &  \\
     & \ddots & \ddots & D_{k-1, k} \\
     &  & D_{k, k-1} & D_{k, k} 
    \end{bmatrix} \in \mathbb{C}^{ d_n \times d_n},
\end{equation*}
so that 
\begin{equation*}
    \mathcal{C}_{ n } = \left( I_{ k } \otimes F_{ n } \right) \mathcal{D}_{n} \left( I_{ k } \otimes F_{ n} \right)^*.
\end{equation*}
Moreover, due to structure of $\mathcal{D}_n$, there exists a permutation $P_n$ of size $d_n \times d_n$ such that $P_n \mathcal{D}_n P_n^T$ is a block diagonal matrix whose $k \times k$ blocks exhibit a tridiagonal structure:
\begin{equation*}
    P_n \mathcal{D}_n P_n^T = \diag_{i = 1, \dots, n} \left( T_{i} \right),
\end{equation*}
where 
\begin{equation*}
    T_{ i} = \left[ \left( D_{s, t }\right)_{i, i} \right]_{s, t = 1}^{ k} =
    \begin{bmatrix}
\tilde{\bfc}^{ \left(1, 1 \right)}_i & \tilde{\bfc}^{ \left(1, 2 \right)}_{i} &  &  &  &  &  \\
\tilde{\bfc}^{ \left(2, 1 \right)}_{i} & \tilde{\bfc}^{ \left(2,2 \right)}_i & \ddots &  &  &  &  \\
 &  &  &  & \ddots & \ddots & \tilde{\bfc}^{ \left(k-1, k \right)}_{i} \\
 &  &  &  &  & \tilde{\bfc}^{ \left(k, k-1 \right)}_{i} & \tilde{\bfc}^{ \left(k, k\right)}_i .
\end{bmatrix}
\end{equation*}
Hence, after defining $\mathcal{T}_n =\diag_{i = 1, \dots, n} \left( T_{i} \right) $ we have
\begin{equation*}
    \mathcal{C}_{n} = \left( I_{ k } \otimes F_{ n } \right) P_n^T \mathcal{T}_n P_n \left( I_{ k } \otimes F_{ n } \right)^*,
\end{equation*}
so that the linear system $ \mathcal{C}_{n} y = z $ is equivalent to computing
\begin{equation*} \label{eq:costo_prec}
    y = \left( I_{ k } \otimes F_{ n } \right) P_n^T \mathcal{T}_n^{-1} P_n \left( I_{ k } \otimes F_{ n } \right)^* z.
\end{equation*}
Products of the type $\left( I_{ k } \otimes F_{ n } \right) x$ and $\left( I_{ k } \otimes F_{ n } \right)^* x$ can be performed in $\mathcal{O} \left( kn \log_2 n \right)$ by leveraging $k$ independent Fast Fourier Transforms; moreover, the inversion of $\mathcal{T}_n$ requires $ \mathcal{O} \left( n k \right) $ operations by exploiting its block tridiagonal structure with Thomas Algorithm, for a total computational cost of $2 \mathcal{O} \left( kn \log_2 n \right) + \mathcal{O} \left( n k \right) $. 
Finally, by recalling that $ k $ is fixed, such cost reduces to $\mathcal{O} \left( n \log_2 n \right)$.

\subsection{Numerical validation of the proposed preconditioner}
In this section, we estimate the performance of the preconditioner \eqref{eq:preconditioner} when solving a linear system $\mathcal{A}_n x_n = b_n$, with $\mathcal{A}_n$ as in \eqref{eq:A_Toeplitz} via the GMRES method. 
{It is known (see \cite{EM22, CA96}) that the convergence rate of the method relates to both the condition number $\kappa_2 (A_n)$ and the clustering properties of its eigenvalues.} \\
Let $k = 2$ and set, for $ \theta \in \left[ -\pi, \pi \right]$,
\begin{equation*}
    \begin{cases}
         f_{1,1} \left( \theta \right) =2-2 \cos \left( \theta \right) \\
        f_{1,2} \left( \theta \right)= \left(2-2 \cos \left( \theta \right) \right)^2+1 \\
        f_{2,1}\left( \theta \right)=1 \\
        f_{2,2}\left( \theta \right)= \theta^2;
    \end{cases}
\end{equation*}
notice that such choice yields to a dense $T_{n} \left(f_{2,2} \right)$ block. As a validation benchmark for the preconditioner's performance we select the iteration count for the GMRES method without restart and the tolerance set as $10^{-6}$ as $n$ increases; moreover, we do not impose a bound for the iteration number. The results are summarized in Table \ref{tab:iterazioni}; notice that the iteration count for the problem without preconditioning exhibits a growth as the size increases, while for its preconditioned version the number of iterations plateaus at $6$. 

\begin{table}[H] 
\begin{tabular}{|l|l|l|}
\hline
$n$ & non-preconditioned iteration count & preconditioned iteration count \\  
\hline
$100$  & $135$  & $7$ \\
\hline
$200$  & $277$  & $6$ \\
\hline
$300$  & $422$  & $6$ \\
\hline
$400$  & $569$  & $7$ \\
\hline
$500$ & $712$  & $6$ \\
\hline
$600$ & $859$  & $6$ \\
\hline
$700$ & $1017$ & $7$ \\
\hline
$800$ & $1160$ & $6$ \\
\hline
$900$ & $1310$ & $4$ \\
\hline
$1000$ & $1465$ & $6$ \\
\hline
$1100$ & $1612$ & $6$ \\
\hline
$1200$ & $1763$ & $6$ \\
\hline
$1300$ & $1917$ & $6$ \\
\hline
$1400$ & $2067$ & $6$ \\
\hline
$1500$ & $2216$ & $6$ \\
\hline
$1600$ & $2370$ & $6$ \\
\hline
$1700$ & $2524$ & $6$ \\
\hline
$1800$ & $2672$ & $6$ \\
\hline
$1900$ & $2824$ & $6$ \\
\hline
$2000$ & $2981$ & $6$ \\
\hline
\end{tabular}
\caption{Comparison between GMRES without restart iteration count for the non preconditioned and preconditoned system, with tolerance set as $10^{-6}$ and the right side vector equal to $\left[1, \dots, 1\right]^T \in \mathbb{C}^{d_{n}}$; note that $n$ denotes the size of each block and not the global matrix dimension.}
\label{tab:iterazioni}
\end{table}

\section{Conclusions}\label{sec:end}

In the present paper, we studied the spectral distribution of matrix-sequences with a non-Hermitian block structure, obtaining the result that the GLT symbol well approximates the spectrum of the sequence, under the hypothesis that each block has a Toeplitz structure generated by an essentially positive bounded function.\\
The result is based on the approximation, in the Frobenius norm, of each Toeplitz block with matrices in the circulant algebra.\\
A number of questions arises from the present contribution in terms of theoretical foundations of the results and possible applications and extensions to different settings. Here we list and discuss some of them.
\begin{enumerate}
    \item We extensively used the fact that whenever we consider the geometric mean of two sequences, both generating functions have strictly positive essential infimum. This is a technical assumption that allows us to control the behavior of the spectral norm of the sequence of geometric means. However, as our numerical evidence shows, we can probably eliminate this hypothesis for the more natural assumption that each block is simply Hermitian positive definite.
    \item We analyze the problem in the very specific setting of Toeplitz blocks generated by uni-variate scalar symbols. Clearly, it would be interesting to extend this setting in different directions by considering first multi-variate scalar symbols and then uni-variate and multi-variate matrix-valued symbols for each block.
    \item In \cite{AFGS26,AGS26}, the authors studied the singular value and spectral distributions (the latter in the Hermitian case) even when the blocks do not have the same dimension $n$. Future research could consider under which assumptions this can be extended in our non-Hermitian setting.
    \item The theory of spectral distribution for Toeplitz sequences is usually developed in the case of generating functions in $L^{1}([-\pi,\pi])$. Here, we assumed that the functions are in $L^{\infty}([-\pi,\pi])$ in order to obtain control over the spectral norm of the sequence of geometric means. It would certainly be of interest to study whether this assumption may be relaxed to include all symbols in $L^{1}([-\pi,\pi])$. Regarding this, since our proofs rely on the circulant approximation of Toeplitz matrices in the Frobenius norm, we think that a natural extension of our results can be studied for symbols in $L^2([-\pi,\pi])$.
    \item An interesting topic for future research is represented by the possibility of further exploring applications and numerical methods for solving large linear systems involving matrices with a similar non-Hermitian structure. Note that, in this paper, we already show how to take advantage of the structure of the sequence and the knowledge of the analytical features of the symbol to improve the convergence of GMRES. Moreover, in terms of total computational cost for the solution of linear systems, it would be interesting to explore if it is possible to find even more performing preconditioners under special conditions, e.g., in the case of polynomial symbols. In this case, we would expect to find the solution in linear computational time, without having to rely on FFT.
    \item It is known that a matrix $A=(a_{i,j})$ can be symmetrized using a diagonal matrix if the following two conditions are met:
    \begin{enumerate}
        \item $a_{ij} \neq 0 $ if and only if $ a_{ji} \neq 0$ for every $i,j$;
        \item for every cycle $i_1,\dots,i_k,i_{k+1}$ it holds
        \begin{equation*}
            \prod_{l=1}^{k} a_{i_l}, a_{i_{l+1}} = \prod_{l=1}^{k} a_{i_{l+1}}, a_{i_{l}}, 
        \end{equation*}
        where, by convention, $i_{k+1}=i_1$.
    \end{enumerate}
    Notice that, in our case, these assumptions are trivially satisfied by the entries of the matrix-valued symbol $F(\theta)$ for (a.e.) $\theta \in [-\pi,\pi]$. This observation and the discussion in the previous points lead us to propose the following conjecture. 
\begin{conjecture}\label{conj_final}
Let
\begin{equation*}
    \mathcal{A}_n = 
    \begin{bmatrix}
T_n(f_{1,1}) & T_{n}(f_{1,2}) & \dots & T_n(f_{1,k})  \\
T_n(f_{2,1}) & T_n(f_{2,2}) & \ddots & \vdots  \\
 \vdots & \ddots & \ddots & T_n(f_{k-1 , k}) \\
 T_n(f_{k,1}) & \dots & T_n(f_{k, k-1}) & T_n(f_{k, k})
\end{bmatrix},
\end{equation*}
where, for every $i,j=1,\dots, k,$ $f_{i,j} \in L^{2}([-\pi,\pi])$ and real-valued almost everywhere with $T_n(f_{i,j}) $ Hermitian positive definite whenever $i \neq j$, or $f_{i,j}=0$ almost everywhere. Assume that, for every cycle $i_1,\dots,i_k,i_{k+1}$, we have
\begin{equation*}
\prod_{l=1}^{k} f_{i_l,i_{l+1}} (\theta) = \prod_{l=1}^{k} f_{i_{l+1},i_l} (\theta)
\end{equation*}
for a.e. $\theta \in [-\pi,\pi]$, where $i_{k+1}=i_1$.
Then,
\begin{equation*}
 \{\mathcal{A}_n\}_n \sim_{\lambda}  \begin{bmatrix}
f_{1,1} & f_{1,2} & \dots &  f_{1,k}\\
f_{2,1} & f_{2,2} & \ddots & \vdots \\
\vdots & \ddots & \ddots & f_{k-1 , k} \\
f_{k,1} & \dots & f_{k, k-1} & f_{k, k}
\end{bmatrix}.
\end{equation*}

\end{conjecture}
\end{enumerate}

\end{document}